\newcommand{\notransversal}{\mathbin{\hbox{$\cap$ \kern-.45cm
\raise.3ex\hbox{$\top$}\kern-.26cm \raise.3ex\hbox{$/$}}}}
\newtheorem{theorem}{Theorem}
\newtheorem{corollary}[theorem]{Corollary}
\newtheorem{lemma}[theorem]{Lemma}
\newtheorem{proposition}[theorem]{Proposition}
\newtheorem{example}[theorem]{Example}
\newtheorem{definition}[theorem]{Definition}
\newtheorem{remark}[theorem]{Remark}
\begin{document}
\thispagestyle{empty}

\pagenumbering{roman} 

\pagestyle{headings}
\title{Linear  Permutations and their Compositional Inverses over $\mathbb{F}_{q^n}$}
\author{Gustavo Terra Bastos}

\maketitle

\begin{abstract}
The use of permutation polynomials has appeared, along to their compositional inverses, as a good choice in the implementation of cryptographic systems. Hence, there has been a demand for constructions of these polynomials which coefficients belong to a finite field. As a particular case of permutation polynomial, involution is highly desired since its compositional inverse is itself. In this work, we present an effective way of how to construct several linear permutation polynomials over $\mathbb{F}_{q^n}$ as well as their compositional inverses using a decomposition of $\displaystyle{\frac{\mathbb{F}_q[x]}{\left\langle x^n -1 \right\rangle}}$ based on its primitive idempotents. As a consequence, an immediate construction of involutions is presented.
\end{abstract}

\begin{IEEEkeywords}
Finite fields, linear permutations, permutation polynomials, involutions.
\end{IEEEkeywords}

%
\IEEEpeerreviewmaketitle

\section{Introduction}
%
%
%
%
\IEEEPARstart{L}{et} $\mathbb{F}_{q}$ be a finite field with $q$ elements, in which $q$ is prime or a prime power. $\mathbb{F}_{q^n}$ denotes a finite extension field of $\mathbb{F}_{q}$, which may be seen as a $n-$dimensional vector space over $\mathbb{F}_q$.

A polynomial $f(x) \in \mathbb{F}_{q^n} [x]$ is called a \emph{permutation polynomial} of $\mathbb{F}_{q^n}$ if the induced mapping by it
\begin{equation}
\begin{array}{cccc}
\phi_f:&\mathbb{F}_{q^n} &\rightarrow& \mathbb{F}_{q^n}\\
&x &\mapsto& \phi_f (x)=f(x)
\end{array}  
\end{equation}
is a bijection of $\mathbb{F}_{q^n}$ on itself. Note that it is always possible to obtain $f(x) \in \mathbb{F}_{q^n} [x]$ from $\phi_f$ by the Lagrange interpolation method.

From the finiteness of $\mathbb{F}_{q^n}$, simple conditions determine if $f(x)$ is a permutation polynomial; for instance, if $f(x)$ is one-to-one. Nonetheless, making explicit conditions over the coefficients of $f(x)$ so that it is a permutation one is not an easy task.

Given $f(x)$ a bijection/permutation polynomial of $\mathbb{F}_{q^n}$, the (unique) compositional inverse of $f(x)$ is denoted $f^{-1} (x) \in \mathbb{F}_{q^n} [x]$, in which 
\begin{equation}
f(x) \circ f^{-1}(x)\equiv f^{-1} (x) \circ f(x) \equiv x \mod x^{q^n} -x .  
\end{equation}

Recently, in~\cite{china} the authors have presented the compositional inverses of all permutation polynomials of degree $\leq 6$ over $\mathbb{F}_{q^n}$ and inverses of permutation polynomial of degree $7$ in characteristic $2$.

Based on the concept of complete mappings for groups, complete permutation polynomials~\cite{completemaps} are defined. If $f(x) \in \mathbb{F}_{q^n} [x]$ is a permutation polynomial over $\mathbb{F}_{q^n}$ in which $f(x)+x$ is also a permutation one, so $f(x)$ is called \emph{complete permutation polynomial} (or 
complete mapping polynomial).  In~\cite{completemaps} several families of complete permutation polynomials have been presented and, in particular, all complete permutation polynomials which degree is less than 6 have been classified. Later~\cite{bcpp}, $\lambda-$\emph{complete permutation polynomials}, which are the natural extension of complete permutation polynomial have been defined, namely, given $\lambda \in \mathbb{F}_{q^n}$, $f(x)$ is a $\lambda-$complete permutation polynomial if $f(x)$ and $f(x)+\lambda x$ are permutation polynomials over $\mathbb{F}_{q^n}$.

In~\cite{compinverse2} it has been presented the compositional inverses of linear permutations (particular case of permutation polynomials) of the form $x + x^2 + Tr_{2^n}\left(\frac{x}{a}\right)$, in which $a\in \mathbb{F}^{*}_{2^n}$ and $Tr_{2^n}(\cdot)$ denotes the Trace map over $\mathbb{F}_{2^n}$. In~\cite{tuxanidy}, the authors have exhibited compositional inverses of some linear permutation binomials beyond some $\lambda-$complete permutation polynomials.

In particular, permutation polynomials, which compositional inverses are themselves, are called \emph{involutions}, i.e, $f(x) \in \mathbb{F}_{q^n}[x]$ is an involution if $(f\circ f)(x)=f^2 (x)\equiv x \mod x^{q^n} -x$. 

Describing explicit families of permutation polynomials and their compositional inverses is a current research problem both to theoretic aspect and from application perspective due to so many interesting issues in  error-correcting codes, cryptography and combinatorial designs. Just quoting a reference about the relevance of permutation polynomial studies, it is worth mentioning~\cite{app} in which, in 1970s, the authors already used permutation polynomials/rational functions over finite fields in order to propose some cryptographic systems. Currently, permutation polynomials may be applied to S-boxes in cryptosystems acting as extra protection layer, and their compositional inverses working on decryption process. The use of involutions in this context appears as interesting solution once the system is not required to storage the different permutations to the encryption-decryption process. For more recent references, see PRINCE~\cite{prince} (Use of linear involutions) and iSCREAM~\cite{grosso} (Use of non-linear involutions).

Based on applications of involutions in cryptography problems and even the development of the own theory properly, in~\cite{charpin} the authors have developed a mathematical background for involutions providing  several constructions over $2-$characteristic finite fields, making use of linear polynomials and $b-$linear translators as some of the algebraic tools used in that paper. Moreover, an analysis about fixed points for some involutions is also addressed.

In~\cite{lucas} the author has proposed some families of linear permutations over $\mathbb{F}_{q^n}$ by making use of a new class of linear polynomials called \emph{nilpotent linear polynomials} which are defined next: Given a positive integer $t\geq 2$, $L(x)\in \mathbb{F}_{q^n} [x]$ is a nilpotent linear polynomial if $L^t (x) =(\underbrace{L\circ L \circ...\circ L}_{t-times})(x)\equiv 0 \mod x^{q^n} - x$. He has also proposed constructions of binary linear involutions with no fixed points.

A characterization of when $x^r h\left(x^s \right)$ is an involution over $\mathbb{F}_q$ has been developed in~\cite{zheng} under some restrictions over $r,s$ and $h(x)$. It is obtained from involutions over the set of $d-$th roots of unity, in which $ds=q-1$, and congruent and linear equation systems. 

From the AGW-Criterion~\cite{agw}, in~\cite{niu} some involutions with form $x^r h(x^{q-1})$ over $\mathbb{F}_{q^2}$ have been constructed. Moreover, the authors have approached how to explicit the compositional inverse and, in particular cases, the involutory property of polynomial $f(x)=g\left(x^{q^i}- x +\delta\right) +cx$. Finally, the fixed points of some polynomials described in this paper have also been analyzed.

This work aims to study the behaviour of conventional $q-$associates of linear permutations on the simple components of the $\mathbb{F}_q-$algebra $\displaystyle{R_{q,n}:=\frac{\mathbb{F}_q [x]}{\left\langle x^n -1 \right\rangle}}$ from the primitive idempotent perspective. In particular cases, primitive idempotents are easily described via $\mathbb{F}_q-$algebra isomorphism between $R_{q,n}$ and the group algebra $\mathbb{F}_q C$, in which $C$ is $n-$order cyclic group. Based on in this new perspective and from the possession of some units in $R_{q,n}$, it is offered an easy implementation technique which allow us to describe families of linear permutations and their respective (linear) compositional inverses. In particular, families of involutions are also described, which elements can be used once more in order to provide new involutions.  

In Section~\ref{basics}, we review the mathematical background needed for understanding linear polynomials (and the particular cases: linear permutations and involutions) and the relationship between linear polynomials and their conventional $q-$associates. Such relationship is explored throughout this work. Further, we define primitive idempotents and  analyze some properties. In Section~\ref{sectioni}, we study the cyclic shifts of linear permutations, that are linear ones as well, and which applications will be used in Section~\ref{construcao}. In this Section, we present linear permutations over $\mathbb{F}_{q^n}$ which coefficients are in $\mathbb{F}_{q}$ from a rereading of the famous Chinese Remainder Theorem based on primitive idempotents. As a particular case, it is possible to get several linear involutions over $\mathbb{F}_{q^n}$. Finally, conclusions and future problems are drawn in Section~\ref{conclusion}.
\section{Basics}\label{basics}
\subsection{Linear Polynomials}
Given $f(x) \in \mathbb{F}_{q^n} [x]$ a permutation polynomial over $\mathbb{F}_{q^n}$, there exists $g(x)\in \mathbb{F}_{q^n} [x]$ which degree is less than $q^n$ and $f(c)=g(c)$ for all $c \in \mathbb{F}_{q^n}$, namely, $f(x)\equiv g(x)\mod x^{q^n} -x$. Thus, it is possible to represent every permutation polynomials in a \emph{reduced degree} version, i.e., permutation polynomials which degree is less than $q^n$. From now on, we will implicitly work only with reduced degree polynomials.

In this work, we focus on the class of the \emph{linear/linearized polynomials} (also called $q$-polynomials) over $\mathbb{F}_{q^n}$, which also represent the $\mathbb{F}_q-$linear mappings from $\mathbb{F}_{q^n}$ to itself, seen as $n-$dimensional $\mathbb{F}_q-$vector space. Such polynomials are described as
\begin{equation}
F(x)=\sum_{i=0}^{n-1}f_i x^{[i]}\in \mathbb{F}_{q^n} [x],    
\end{equation}
in which $[i]=q^i$, for $0\leq i \leq n-1$. We refer to permutation linear polynomials simply as \emph{linear permutations}.

The following properties are verified for linear polynomials over $\mathbb{F}_{q^n}$  
\begin{itemize}
\item[(i)] $F\left(\alpha + \beta\right)=F\left(\alpha \right)+ F\left(\beta\right)$ and 
\item[(ii)] $F\left(a\alpha\right)= a F\left(\alpha\right)$, 
\end{itemize}
for all $\alpha,\beta \in \mathbb{F}_{q^m}$ and $a \in \mathbb{F}_q$, in which $\mathbb{F}_{q^{m}}$ is an arbitrary extension of $\mathbb{F}_{q^n}$. For a seminal reference over finite fields and, in particular, polynomials over finite fields see~\cite{finitefields}.

Besides the applications of (permutation) linear polynomials in block codes, cryptography and combinatorial designs, currently one of their subclass called \emph{subspace polynomials} has been applied in the context of random network coding~\cite{koetterk} in order to obtain good constant dimension codes, which are $k-$dimensional subspace codes. See~\cite{zhao} for one of the most recent constructions and the references therein.

Let 
\begin{eqnarray}
\mathcal{L}_n \left(\mathbb{F}_{q^n}\right)&:=&\frac{\mathbb{F}_{q^n}[x]}{\left\langle x^{q^n} - x \right\rangle}\\
&:=&\left\{f_{n-1}x^{[n-1]}+...+f_0 x^{[0]}: f_i \in \mathbb{F}_{q^n}\right\} \nonumber
\end{eqnarray} 
be the set of linear polynomials over $\mathbb{F}_{q^n}$. In particular, the subset of $\mathcal{L}_n \left(\mathbb{F}_{q^n}\right)$ formed by polynomials which coefficients belong to $\mathbb{F}_{q}$ is denoted as $\mathcal{L}_n \left(\mathbb{F}_{q}\right)$. Since the ordinary multiplication of two linear polynomials, in general, do not provide a linear polynomial, then one define the \emph{symbolic multiplication} between two linear polynomials $F(x)$ and $G(x)$ as $F(x) \circ G (x)=F\left(G (x)\right)$, namely, the symbolic multiplication is, in fact, the composition operation. In this sense, considering $F(x), G (x)\in \mathcal{L}_n \left(\mathbb{F}_{q^n}\right)$, $G (x)$ \emph{divides symbolically} $F(x)$ if there is a linear polynomial $H(x)\in \mathcal{L}_n \left(\mathbb{F}_{q^n}\right)$ so that $G(x)\circ H(x)=F(x)$. It is trivial to notice that $\circ$ is not a commutative operation, but it is associative. Hence, in possession of the usual sum and scalar product, besides the polynomial composition/symbolic multiplication, the set $\mathcal{L}_n \left(\mathbb{F}_{q^n}\right)$ is in fact a non-commutative $\mathbb{F}_q$-algebra. In particular, $\mathcal{P}_n \left(\mathbb{F}_{q^n}\right)\subset\mathcal{L}_n \left(\mathbb{F}_{q^n}\right)$ describes the non-abelian group under the composition operation formed by the linear permutations. For more information about the algebraic structure of $\mathcal{L}_n \left(\mathbb{F}_{q^n}\right)$ and its several isomorphic forms as group algebra, matrix algebra and etc., we recommend~\cite{wuliu}.

\begin{definition}
The polynomials 
\begin{equation}
f(x)=\sum_{i=0} ^{n-1} f_i x^i\,\,\,\mbox{and}\,\,\, F(x)=\sum_{i=0} ^{n-1} f_i x^{[i]}
\end{equation}
over $\mathbb{F}_{q^n}$ are called $q-$associates of each other. More specifically, $f(x)$ is \emph{the conventional $q-$associate} of $F(x)$ and $F(x)$ is \emph{the linearized $q-$associate} of $f(x)$.
\end{definition}

Henceforth, we denote linear polynomials with capital letters and their conventional \linebreak$q-$associates with small letters.

\begin{lemma}\cite[Lemma 3.59]{finitefields}\label{lemma2}
Let $F (x)$ and $G (x)$ be linear polynomials over $\mathbb{F}_q$ with conventional $q-$associates $f (x)$ and $g (x)$. Then $h(x)=f (x) g (x)$ and $H(x)=F(x) \circ G (x)$ are $q-$associates of each other.
\end{lemma}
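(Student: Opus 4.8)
The plan is to compute $H(x) = F\bigl(G(x)\bigr)$ directly by substitution and then read off its coefficients, showing that they coincide with the Cauchy-product coefficients of $h(x) = f(x) g(x)$. Writing $F(x) = \sum_{i=0}^{n-1} f_i x^{q^i}$ and $G(x) = \sum_{j=0}^{n-1} g_j x^{q^j}$, I would first substitute $G$ into $F$ to obtain
\[
H(x) = F\bigl(G(x)\bigr) = \sum_{i} f_i \bigl(G(x)\bigr)^{q^i} = \sum_{i} f_i \Bigl(\sum_{j} g_j x^{q^j}\Bigr)^{q^i}.
\]

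The crucial ingredient is the additivity of the Frobenius map in characteristic $p$, where $q = p^r$: since raising to the $q^i$-th power is a field (ring) homomorphism, the $q^i$-th power of a sum splits term by term, giving $\bigl(\sum_{j} g_j x^{q^j}\bigr)^{q^i} = \sum_{j} g_j^{q^i} x^{q^{i+j}}$. This is exactly where the hypothesis that the coefficients lie in $\mathbb{F}_q$ enters decisively: every $g_j \in \mathbb{F}_q$ is fixed by the $q$-power map, and hence $g_j^{q^i} = g_j$ for all $i$. Thus the inner expression collapses to $\sum_{j} g_j x^{q^{i+j}}$, with no distortion of the coefficients.

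Substituting back and collecting terms by the common exponent, I would reindex with $k = i+j$ to obtain
\[
H(x) = \sum_{i}\sum_{j} f_i g_j\, x^{q^{i+j}} = \sum_{k} \Bigl(\sum_{i+j=k} f_i g_j\Bigr) x^{q^k}.
\]
The inner sum $\sum_{i+j=k} f_i g_j$ is precisely the coefficient of $x^k$ in the ordinary product $h(x) = f(x) g(x)$. Therefore $H(x)$ is the linearized $q$-associate of $h(x)$, equivalently $h(x)$ is the conventional $q$-associate of $H(x)$, which is the assertion.

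I do not anticipate a genuine obstacle, since the argument is a direct computation; the only point demanding care is the twofold use of the hypothesis $g_j \in \mathbb{F}_q$ together with $q = p^r$, namely that the exponent $q^i$ both distributes over the sum (Frobenius additivity) and fixes the coefficients. It is worth emphasizing that the resulting identity is purely formal — it holds at the level of polynomials \emph{before} any reduction modulo $x^{q^n} - x$ — so one need not track whether the exponents $q^{i+j}$ exceed $q^n$, and the restriction that the coefficients be in $\mathbb{F}_q$ (rather than $\mathbb{F}_{q^n}$) is exactly what makes the conventional $q$-associate of the composition multiplicative.
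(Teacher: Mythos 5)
Your computation is correct and is exactly the standard argument for this result: the paper does not reproduce a proof but simply cites \cite[Lemma 3.59]{finitefields}, and the proof given there is precisely your substitution--Frobenius--reindexing calculation, with the hypothesis $g_j \in \mathbb{F}_q$ used to get $g_j^{q^i} = g_j$. The only cosmetic remark is that in the paper's setting compositions are eventually reduced modulo $x^{q^n}-x$ while conventional associates are reduced modulo $x^n-1$, and the correspondence $x^{q^{i+j}} \leftrightarrow x^{i+j}$ is compatible with both reductions, so your formal identity descends correctly.
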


\begin{theorem}\cite[Theorem 3.62]{finitefields}\label{divisao}
Let $F (x)$ and $G(x)$ be linear polynomials over $\mathbb{F}_q$ with conventional $q-$associates $f (x)$ and $g(x)$. Then the following properties are equivalent: 
\begin{itemize}
\item[(i)] $F(x)$ symbolically divides $G(x)$;
\item[(ii)] $F (x)$ divides $G(x)$ in the ordinary sense; 
\item[(iii)] $f (x)$ divides $g(x)$.
\end{itemize}
\end{theorem}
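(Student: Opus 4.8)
The plan is to prove all three equivalences by establishing $(iii)\Leftrightarrow(i)$, then $(i)\Rightarrow(ii)$, and finally $(ii)\Rightarrow(i)$, leaning throughout on Lemma~\ref{lemma2} and on the elementary fact that a linear polynomial and its conventional $q$-associate determine each other uniquely (their coefficients are literally the same sequence). The equivalence $(iii)\Leftrightarrow(i)$ is essentially a restatement of Lemma~\ref{lemma2}. If $f$ divides $g$, I would write $g=fh$, let $H$ be the linearized $q$-associate of $h$, and invoke Lemma~\ref{lemma2} to see that $F\circ H$ has conventional $q$-associate $fh=g$; hence $F\circ H=G$ and $F$ symbolically divides $G$. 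Conversely, if $G=F\circ H$ for some linear $H$ with $q$-associate $h$, then by Lemma~\ref{lemma2} the conventional $q$-associate of $G$ is $fh$, and comparing with $g$ forces $g=fh$, so $f\mid g$.

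For $(i)\Rightarrow(ii)$ the crucial observation is that symbolic multiplication of linear polynomials with coefficients in $\mathbb{F}_q$ is \emph{commutative}: since $F\circ H$ and $H\circ F$ have conventional $q$-associates $fh$ and $hf$ respectively, and ordinary multiplication in $\mathbb{F}_q[x]$ is commutative, the two linear polynomials coincide. Thus from $G=F\circ H=H\circ F=H(F(x))=\sum_i h_i F(x)^{q^i}$ each summand is an ordinary power of $F(x)$ and is therefore divisible by $F(x)$, giving $F(x)\mid G(x)$ in the ordinary sense.

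The remaining direction $(ii)\Rightarrow(i)$ is the delicate one, and I would treat it by a symbolic division algorithm. Writing $m$ for the $q$-degree of $F$ (the largest index with $f_m\neq 0$), I would cancel the leading term of $G$ step by step: to remove a term $g_k x^{q^k}$ with $k\ge m$, subtract $F\circ\bigl((g_k/f_m)x^{q^{k-m}}\bigr)$, whose leading term is exactly $g_k x^{q^k}$; this is legitimate precisely because $f_m\neq 0$. Iterating yields $G=F\circ S+R$ with $R$ a linear polynomial of $q$-degree strictly below $m$, so that $\deg R<q^m=\deg F$ as ordinary polynomials. By the already-proved $(i)\Rightarrow(ii)$, $F$ divides $F\circ S$ ordinarily, and combining with the hypothesis $F\mid G$ gives $F\mid R$. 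Since a nonzero polynomial cannot be divisible by one of strictly larger degree, $R=0$, whence $G=F\circ S$ and $(i)$ holds.

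I expect the bookkeeping in this last step to be the main obstacle: one must check that each cancellation strictly lowers the $q$-degree, that the process terminates with a remainder of $q$-degree below $m$, and that the leading-coefficient choice $g_k/f_m$ is always available. Equally important, the entire argument rests on the commutativity of $\circ$, which holds only because the coefficients lie in $\mathbb{F}_q$; I would emphasize that this hypothesis is genuinely used and that the statement would fail for general coefficients in $\mathbb{F}_{q^n}$.
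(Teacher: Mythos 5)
The paper itself offers no proof of this statement---it is quoted verbatim from Lidl--Niederreiter, Theorem 3.62---so the only benchmark is the textbook argument, and your proof is correct and essentially reproduces it: (i)$\Leftrightarrow$(iii) via Lemma~\ref{lemma2}, (i)$\Rightarrow$(ii) by using commutativity of $\circ$ over $\mathbb{F}_q$ to write $G(x)=\sum_i h_i F(x)^{q^i}$, and (ii)$\Rightarrow$(i) by division with remainder plus the degree comparison forcing $R=0$ (the textbook runs the division on the conventional $q$-associates and transfers it, whereas you run it symbolically; the two are interchangeable). The one point worth a sentence is that the paper's definition of symbolic divisibility allows the cofactor $H$ to lie in $\mathcal{L}_n\left(\mathbb{F}_{q^n}\right)$, while your appeal to Lemma~\ref{lemma2} in (i)$\Rightarrow$(iii) tacitly takes $H$ with coefficients in $\mathbb{F}_q$; this is harmless (uniqueness of the symbolic quotient, or the textbook's own convention, closes the gap), but you should say so explicitly.
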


It is important to stress that the Lemma~\ref{lemma2} and Theorem~\ref{divisao} will be used freely during all this work.

Another useful theoretical consideration used in this work comes from the classical Rank-Nullity Theorem, in which the linear polynomial $F(x) \in \mathcal{L}_n \left(\mathbb{F}_{q^n}\right)$ is a linear permutation if and only if $0$ is its only root in $\mathbb{F}_{q^n}$.

\subsection{Primitive Idempotents}
In~\cite{polcycliccodes} the authors used linear polynomials in order to get good cyclic codes~\cite{mac}. In this work, we somehow take the opposite way, i.e, the idempotent generators of cyclic codes are used in order to provide a linear permutation (linear polynomial) construction.

From now on, we take positive integers $n$ and $q$ so that $\gcd(n,q)=1$. Based on well-known Chinese Remainder Theorem, the ring $\displaystyle{R_{q,n} =\frac{\mathbb{F}_q [x]}{\left\langle x^n -1 \right\rangle}}$ can be decomposed as
\begin{eqnarray}\label{tcr}
R_{q,n} =\frac{\mathbb{F}_q [x]}{\left\langle x^n -1 \right\rangle}&\cong&  \frac{\mathbb{F}_q [x]}{\left\langle f_1 (x) \right\rangle} \oplus \frac{\mathbb{F}_q [x]}{\left\langle f_2 (x) \right\rangle} \oplus \ldots \oplus \frac{\mathbb{F}_q [x]}{\left\langle f_t (x) \right\rangle},\nonumber\\
&\cong& \mathbb{F}_q\left(\xi_1 \right)\oplus\mathbb{F}_q\left(\xi_2 \right)\oplus\ldots \mathbb{F}_q\left(\xi_t \right)
\end{eqnarray}
in which $f_i (x)$ are the distinct irreducible factors of $x^n -1$ and $\mathbb{F}_q \left(\xi_i \right)$ are finite extensions of $\mathbb{F}_q$ given by roots of $f_i (x)$, for $1\leq i \leq t$. The minimal ideals $\displaystyle{\frac{\mathbb{F}_q [x]}{\left\langle f_i (x)\right\rangle}}$ $\left(\mbox{or } \mathbb{F}_q \left(\xi_i\right)\right)$ in the decomposition of $R_{q,n}$ in~\eqref{tcr} are called its \emph{simple components}. 
\begin{definition}
A polynomial $e(x)$ of $R_{q,n}$ is an \emph{idempotent} if $e(x)\equiv e^2 (x)=e(x) e(x) \mod x^n -1.$
\end{definition}

Since $R_{q,n}$ is a semisimple ring~\cite{polcino}, there exists a family $\{e_1 (x) , e_2 (x) , ... ,e_t (x)\}$ of non-zero elements in $R_{q,n}$, called (orthogonal) \emph{primitive idempotents of} $R_{q,n}$, so that
\begin{itemize}
\item [(i)] If $i \neq j $, then 
\begin{equation}\label{eq7}
e_i (x)e_j (x) \equiv 0\mod x^n -1,
\end{equation}
for $1 \leq i\neq  j\leq t$. \item [(ii)] 
\begin{equation}\label{soma1}
e_1 (x) + e_2 (x) + ...+ e_t (x)=1.
\end{equation}
\item [(iii)] $e_i(x)$ cannot be written as $e_i (x) = \overline{e} (x) + \tilde{e} (x),$ in which $\overline{e} (x)$ and $\tilde{e} (x)$ are non-zero idempotents so that $\overline{e} (x) \tilde{e} (x)=0$, $1 \leq i\leq t$. 
\end{itemize}
%

\begin{remark}
Notice that all non-primitive idempotents $e(x)$ can be written as a sum of some primitive idempotents.  
\end{remark}

Idempotents and divisors of $x^n -1$ in $R_{q,n}$ are related. On next result, given $g(x)$ a divisor of $x^n -1$ in $R_{q,n}$, we have

\begin{theorem}\cite[Theorem 1,\, pag. 217]{mac}\label{idempcodc}
\begin{itemize}
\item[(i)] A cyclic code or ideal $\mathcal{C}=\langle g(x)\rangle$ contains a unique idempotent $e(x)$ so that $\mathcal{C}=\langle e(x)\rangle$. Also $e(x)=p(x)g(x)$ for some polynomial $p(x)$, and $e\left(\alpha^i \right)=0$ iff $g\left(\alpha^i \right)=0$.
\item[(ii)] $c(x) \in \mathcal{C}$ if and only if $c(x)e(x)=c(x)$.
\end{itemize}
\end{theorem}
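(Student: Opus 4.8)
The plan is to exploit the separability of $x^n - 1$ that follows from the standing hypothesis $\gcd(n,q)=1$: this makes $g(x)$ coprime to its complementary factor and lets me manufacture the idempotent through B\'ezout's identity. First I would write $x^n - 1 = g(x) h(x)$ for the complementary (check) polynomial $h(x)$; since the $n$-th roots of unity are all simple, $g(x)$ and $h(x)$ share no root and are therefore coprime in $\mathbb{F}_q[x]$. The extended Euclidean algorithm then produces $a(x), b(x)$ with $a(x) g(x) + b(x) h(x) = 1$, and I would define $e(x) \equiv a(x) g(x) \bmod (x^n - 1)$, so that $p(x) = a(x)$ realizes the claimed factorization $e(x) = p(x) g(x)$.

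Next I would verify the three properties in part (i). Idempotency follows by reducing $e(x)^2 = e(x)\, a(x) g(x) = e(x)\bigl(1 - b(x) h(x)\bigr)$ modulo $x^n - 1$ and using $g(x) h(x) \equiv 0$. The equality $\mathcal{C} = \langle e(x)\rangle$ splits into two inclusions: $e(x) = a(x) g(x) \in \langle g(x)\rangle$ gives $\langle e(x)\rangle \subseteq \mathcal{C}$, while multiplying the B\'ezout relation by $g(x)$ and again killing $g(x) h(x)$ yields $g(x) \equiv e(x) g(x) \in \langle e(x)\rangle$, giving the reverse inclusion. For the root condition, the direction $g(\alpha^i)=0 \Rightarrow e(\alpha^i)=0$ is immediate from $e = p g$, where $\alpha$ is a fixed primitive $n$-th root of unity; for the converse I would observe that any $\alpha^i$ with $g(\alpha^i) \neq 0$ must satisfy $h(\alpha^i) = 0$ (as $\alpha^i$ is a root of $x^n - 1 = g h$), so evaluating the B\'ezout identity at $\alpha^i$ forces $e(\alpha^i) = a(\alpha^i) g(\alpha^i) = 1 \neq 0$.

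For uniqueness I would take two idempotent generators $e_1(x), e_2(x)$ of $\mathcal{C}$. Since each lies in the ideal generated by the other, I can write $e_1 = r e_2$ and $e_2 = s e_1$; multiplying the first by $e_2$ and using $e_2^2 = e_2$ gives $e_1 e_2 = e_1$, and symmetrically $e_2 e_1 = e_2$, so commutativity of $R_{q,n}$ forces $e_1 = e_2$. Part (ii) is then routine: if $c(x) e(x) = c(x)$ then $c(x) = c(x) e(x) \in \mathcal{C}$ because $\mathcal{C}$ is an ideal containing $e(x)$; conversely, if $c(x) \in \mathcal{C} = \langle e(x)\rangle$ then $c(x) = d(x) e(x)$, and multiplying by $e(x)$ with $e(x)^2 = e(x)$ gives $c(x) e(x) = d(x) e(x)^2 = d(x) e(x) = c(x)$.

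Every ingredient here is elementary commutative-ring theory, so I do not anticipate a deep obstacle; the one point demanding care is the hypothesis $\gcd(n,q) = 1$, which is precisely what guarantees that $x^n - 1$ is separable and hence that $g(x)$ and $h(x)$ are coprime. Without it the B\'ezout step collapses and the clean dichotomy $e(\alpha^i) \in \{0,1\}$ fails, so I would flag this as the hypothesis silently carrying the entire argument.
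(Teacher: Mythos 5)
Your proof is correct, and it is the standard Bézout-identity argument that the cited source (MacWilliams--Sloane, Theorem 1, p.~217) itself uses; the paper quotes this theorem without reproducing a proof, so there is nothing to diverge from. The only cosmetic imprecision is that the reduced representative of $e(x)$ modulo $x^n-1$ is $\left(a(x)-m(x)h(x)\right)g(x)$ rather than $a(x)g(x)$ itself, so $p(x)=a(x)-m(x)h(x)$; this does not affect any step.
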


More information and results about semisimple rings and other considerations about their algebraic structure based on idempotents, see~\cite{polcino}.

\begin{remark}
Notice that the simple components of $R_{q,n}$ are generated by primitive idempotents as well. For more information, see~\cite[pg.219]{mac}.
\end{remark}
\section{$\alpha-$cyclic shift and linear permutations}\label{sectioni}
The motivation of this section comes from the fact that composition of linear permutations is also a linear permutation. From an equivalence relation defined next, we provide large trivially-constructed families of  linear permutations. We also analyze under which conditions such equivalence relation can be used, in order to get the corresponding compositional inverses, in particular, involutions over $\mathbb{F}_{q^n}$. The results of this section may be applied to further constructions as it will be seen on next Section.

The following basic result will guide all the discussions proposed in this section 

\begin{proposition}\label{deslciclico}
If $F(x) \in \mathcal{L}_n\left(\mathbb{F}_{q^n}\right)$ is a linear permutation over $\mathbb{F}_{q^n}$, then $F(x)\circ \alpha x^{[1]}\in \mathcal{L}_n\left(\mathbb{F}_{q^n}\right)$ is also a linear permutation of $\mathbb{F}_{q^n}$, for any $\alpha \in \mathbb{F}^{*}_{q^n}$.
\end{proposition}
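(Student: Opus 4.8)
The plan is to recognize that $\alpha x^{[1]}$ is itself a linear permutation and then to obtain the conclusion from the closure of linear permutations under composition, which is already available since $\mathcal{P}_n\left(\mathbb{F}_{q^n}\right)$ forms a group under $\circ$.

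First I would unwind the notation: since $[1]=q$, the polynomial $\alpha x^{[1]}=\alpha x^{q}$ lies in $\mathcal{L}_n\left(\mathbb{F}_{q^n}\right)$, being a linear polynomial in which only the coefficient of $x^{[1]}$ is nonzero. I would then verify that $\alpha x^{q}$ is in fact a linear permutation. The cleanest route is the Rank--Nullity criterion recalled just above the statement: a linear polynomial is a linear permutation if and only if $0$ is its only root in $\mathbb{F}_{q^n}$. If $\beta \in \mathbb{F}_{q^n}$ satisfies $\alpha \beta^{q}=0$, then, because $\alpha \in \mathbb{F}^{*}_{q^n}$, we get $\beta^{q}=0$ and hence $\beta=0$. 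Thus $0$ is the unique root of $\alpha x^{q}$, so it is a linear permutation. (Equivalently, one may argue that $\beta \mapsto \beta^{q}$ is the Frobenius automorphism of $\mathbb{F}_{q^n}$ and $\beta \mapsto \alpha\beta$ is multiplication by a nonzero scalar, each a bijection, so their composite $\beta \mapsto \alpha\beta^{q}$ is a bijection.)

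With $\alpha x^{[1]}$ established as a linear permutation, I would combine this with the hypothesis that $F(x)$ is a linear permutation. The symbolic multiplication $F(x)\circ \alpha x^{[1]}=F\!\left(\alpha x^{[1]}\right)$ is again a linear polynomial, since $\mathcal{L}_n\left(\mathbb{F}_{q^n}\right)$ is closed under composition (it is an $\mathbb{F}_q$-algebra under $\circ$, as noted in the preliminaries), and after reduction modulo $x^{q^n}-x$ it lies in $\mathcal{L}_n\left(\mathbb{F}_{q^n}\right)$. Moreover the induced map $\beta \mapsto F\!\left(\alpha\beta^{q}\right)$ is a composition of bijections of $\mathbb{F}_{q^n}$ and hence a bijection, so $F(x)\circ \alpha x^{[1]}$ is a permutation. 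Concisely: both factors lie in the group $\mathcal{P}_n\left(\mathbb{F}_{q^n}\right)$, so their product under $\circ$ does too.

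There is no substantial obstacle here; the only point requiring a moment's care is confirming that $\alpha x^{[1]}$ itself is a permutation, for which the root criterion (or the Frobenius-plus-scaling observation) settles matters immediately. The remainder is a direct appeal to the group structure of $\mathcal{P}_n\left(\mathbb{F}_{q^n}\right)$ recalled earlier in the text.
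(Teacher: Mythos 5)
Your proof is correct and follows essentially the same route as the paper: establish that $\alpha x^{[1]}$ is itself a linear permutation and then invoke closure of linear permutations under composition. The only cosmetic difference is that the paper justifies the permutation property of the monomial $x^{[1]}$ by citing the criterion that $x^q$ permutes $\mathbb{F}_{q^n}$ since $\gcd(q, q^n-1)=1$, whereas you use the trivial-kernel (root) criterion or the Frobenius-plus-scaling observation, which is equally valid.
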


\begin{proof}
Since $gcd\left(q^n -1, q \right)=1$, it follows from~\cite[Theorem 7.8, (ii)]{finitefields} that the monomial $x^{[1]}$ is a linear permutation of $\mathbb{F}_{q^n}$. Hence, $\alpha x^{[1]}$ so is for any $\alpha \in \mathbb{F}^{*}_{q^n}$. Therefore, since $F(x)\circ \alpha x^{[1]}$ is a composition of linear permutations, the result follows.
\end{proof}

Hence, if $\displaystyle{F(x)=\sum_{i=0}^{n-1}f_i x^{[i]}}\in \mathcal{L}_{n}\left(\mathbb{F}_{q^n}\right)$ and $\alpha \in \mathbb{F}^{*}_{q^n}$, then 
\begin{equation}
F(x)\circ \alpha x^{[1]}=\sum_{i=0}^{n-1}\alpha^{[i]} f_i x^{[i+1]},    
\end{equation}
in which the superscripts are taken modulo $n$.

In~\cite{gilbert}, it has been introduced the concept of \emph{cyclically permutable codes}, which are $n-$length block codes in $\mathbb{F}_2 ^n$ . For a more general definition of cyclically permutable codes and their applications in some communication problems, see~\cite{valdemar} and references therein.

In this section, we adapt the classical definitions of cyclic order and cyclic equivalence class arising from cyclically permutable codes constructed in $\displaystyle{R_{q,n} }$ to linear polynomials in $\displaystyle{\mathcal{L}_n \left(\mathbb{F}_{q^n}\right)}$. Originally, let $c(x) \in R_{q,n}$. If $S(\cdot)$ denotes the cyclic shift operator, i.e., $S(c(x))= xc(x)$ taken modulo $x^n -1$, then there is a least integer $1\leq m \leq n$ so that $S^m(c(x)) = S^{m-1}\left(S(c(x))\right)\equiv c(x) \mod x^n -1$, which it is called \emph{cyclic order} of $c(x)$. If $d(x)\equiv x^t c(x) \mod x^n -1$, $1\leq t\leq n-1$, then $d(x)$ is \emph{cyclically equivalent} to $c(x)$ in $R_{q,n}$. Extending the definitions above to $\displaystyle{\mathcal{L}_n \left(\mathbb{F}_{q^n}\right)}-$context, denote $S_{\alpha}(\cdot)$ the (left) $\alpha-$cyclic shift operator, for some $\alpha \in \mathbb{F}^{*}_{q^n}$, as $S_{\alpha}(F(x))\equiv F(x)\circ \alpha x^{[1]}\mod x^{[n]} -x$. In particular, for $\alpha=1$ the corresponding cyclic shift in $\displaystyle{\mathcal{L}_n \left(\mathbb{F}_{q^n}\right)}$ is equal to its equivalent in $R_{q,n}$ and we will adopt the same notation. Given the least integer $m$ so that $S_{\alpha}^m(F(x))=\underbrace{\left(S_{\alpha} \circ ...\circ S_{\alpha}\right)}_{m-times}(F(x)) \equiv F(x) \mod x^{[n]}-x$ such integer $m$ is called as $\alpha-$\emph{cyclic order of} $F(x)$. For $F(x)$ a linear permutation over $\mathbb{F}_{q^n}$, this $m$ is computed below

\begin{theorem}\label{ordciclica}
Let $\beta$ be a primitive element of $\mathbb{F}_{q^n}$. If $F(x) \in \mathcal{L}_n\left(\mathbb{F}_{q^n}\right)$ is a linear permutation over $\mathbb{F}_{q^n}$, $\alpha = \beta^l$, for $1\leq l \leq q^n -2$, and $t$ the least positive integer so that $lt\equiv 0\mod q-1$, then the $\alpha-$cyclic order of $F(x)$ is $m=tn$.
\end{theorem}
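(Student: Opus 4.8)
The plan is to exploit the fact that the $\alpha$-cyclic shift is nothing but right-composition with the \emph{fixed} linear permutation $M(x):=\alpha x^{[1]}$. From the definition $S_\alpha(F)\equiv F\circ \alpha x^{[1]}\pmod{x^{[n]}-x}$ together with the associativity of $\circ$, an immediate induction gives $S_\alpha^m(F)\equiv F\circ M^{\circ m}\pmod{x^{[n]}-x}$. Now the hypothesis that $F$ is a linear permutation is used crucially: $F$ is a unit of the composition group $\mathcal{P}_n\left(\mathbb{F}_{q^n}\right)$, so composing the relation $S_\alpha^m(F)\equiv F$ on the left by the compositional inverse $F^{-1}$ shows that $S_\alpha^m(F)\equiv F$ if and only if $M^{\circ m}\equiv x\pmod{x^{[n]}-x}$. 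In particular the $\alpha$-cyclic order of $F$ does not depend on $F$ at all: it equals the order of $M=\alpha x^{[1]}$ in $\mathcal{P}_n\left(\mathbb{F}_{q^n}\right)$, which is a linear permutation by Proposition~\ref{deslciclico}, so this order is finite and well defined.

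The next step is to compute the iterated composition explicitly. A short induction on $m$ yields
\[
M^{\circ m}(x)=\alpha^{\frac{q^m-1}{q-1}}\,x^{[m]},
\]
the scalar being $\alpha^{1+q+\cdots+q^{m-1}}$. Reducing modulo $x^{[n]}-x$, I use that $x^{[m]}\equiv x^{[m\bmod n]}$ as elements of $\mathcal{L}_n\left(\mathbb{F}_{q^n}\right)$ (since $\theta^{q^n}=\theta$ for every $\theta\in\mathbb{F}_{q^n}$) and that the monomials $x^{[0]},\dots,x^{[n-1]}$ are linearly independent there. Hence $M^{\circ m}\equiv x=x^{[0]}$ forces first $m\equiv 0\pmod n$, to annihilate the monomial part, and then, writing $m=sn$, the scalar condition $\alpha^{(q^{sn}-1)/(q-1)}=1$.

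It remains to translate this scalar condition into the stated congruence. Writing $\alpha=\beta^{l}$ with $\beta$ of order $q^n-1$, the equality $\alpha^{(q^{sn}-1)/(q-1)}=1$ is equivalent to $(q^n-1)\mid l\,\frac{q^{sn}-1}{q-1}$. Using the factorization $\frac{q^{sn}-1}{q-1}=\frac{q^n-1}{q-1}\sum_{k=0}^{s-1}q^{nk}$, this simplifies to $(q-1)\mid l\sum_{k=0}^{s-1}q^{nk}$, and since $q\equiv 1\pmod{q-1}$ we have $\sum_{k=0}^{s-1}q^{nk}\equiv s\pmod{q-1}$, so the condition reduces to $(q-1)\mid ls$. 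By definition $t$ is the least positive integer with $(q-1)\mid lt$, so the least admissible value of $s$ is $t$ and the least admissible $m$ is $m=tn$, as claimed.

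I expect the only genuinely delicate point to be the opening reduction: one must invoke that $F$ is invertible \emph{because} it is a permutation, which is exactly what makes the order independent of $F$, and one must handle the reduction modulo $x^{[n]}-x$ with care, noting that it both collapses $x^{[m]}$ to $x^{[m\bmod n]}$ and leaves the scalar factor intact so that the monomial and scalar conditions decouple. Everything after that is a routine $q$-ary congruence computation. (A direct approach tracking the coefficients of $S_\alpha^m(F)$ is also possible, but it is considerably messier and obscures the fact that the answer is the same for all linear permutations $F$.)
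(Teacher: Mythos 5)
Your proof is correct and follows essentially the same route as the paper: both use invertibility of $F$ to reduce to the order of $\alpha x^{[1]}$ under composition, and your scalar $\alpha^{(q^{n}-1)/(q-1)}$ is exactly the norm $N_{\mathbb{F}_{q^n}/\mathbb{F}_q}(\alpha)$ that the paper accumulates after $n$ shifts, with your congruence $(q-1)\mid ls$ playing the role of the paper's criterion $N(\gamma)=1$ iff $\gamma=\delta^{q-1}$. If anything, your version is slightly more explicit about minimality (forcing $n\mid m$ via linear independence of the monomials $x^{[0]},\dots,x^{[n-1]}$ before imposing the scalar condition), a point the paper leaves somewhat implicit.
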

\begin{proof}
Let $t$ be the least positive integer so that $lt\equiv 0\mod q-1$. Since $F(x)$ is a linear permutation, it is clear that the $1-$cyclic order of $F(x)$ is $n$ because, given least positive integer $m\leq n$ so that $F(x)\circ x^{[m]}\equiv F(x)\mod x^{[n]}-x$, we have
\begin{eqnarray}
x^{[m]}&\equiv& F^{-1}(x)\circ F(x)\circ x^{[m]} \equiv F^{-1}(x)\circ F(x) \nonumber\\
&\equiv& x\mod x^{[n]}-x, 
\end{eqnarray}
namely, $m=n$. Consequently,
\begin{equation}
S^{n}_{\alpha}(F(x))=\alpha^{[0]+[1]+...+[n-1]}F(x)=N(\alpha)F(x),
\end{equation}
in which $N(\cdot)=N_{\mathbb{F}_{q^n}/\mathbb{F}_q}(\cdot)$ denotes the norm function from $\mathbb{F}_{q^n}$ to $\mathbb{F}_q$. So, after applying $tn$ times the $\alpha-$cyclic shift operator $S_{\alpha} (\cdot)$ over $F(x)$, one have
\begin{eqnarray}
S^{tn}_{\alpha}(F(x))&=&\underbrace{\left(S^{n}_{\alpha} \circ \ldots \circ S^{n}_{\alpha}\right)}_{t-times}(F(x))\nonumber\\
&\equiv& N^t(\alpha)F(x)=N\left(\alpha^t \right) F(x) \nonumber\\
&\equiv& N\left(\beta^{M(q-1)}\right) F(x)\nonumber\\
&\equiv& N^M \left(\beta^{q-1}\right)F(x)\nonumber\\
&\equiv&F(x)\mod x^{[n]}-x,
\end{eqnarray}
since $N(\gamma)=1$ if and only if $\gamma=\delta^{q-1}$. 
\end{proof}

From now on, we assume that $\beta$ is a primitive element of $\mathbb{F}_{q^n}$.

Given $F(x) \in\displaystyle{\mathcal{L}_n \left(\mathbb{F}_{q^n}\right)}$ a linear permutation, notice that its $\alpha-$cyclic order is $tn$, for $1\leq t \leq q-1$, and it depends on $\alpha$ only. We say $F(x)$ has \emph{maximal $\alpha- $cyclic order} $(q-1)n$, when $\alpha=\beta^l$ so that $\gcd(l,q-1)=1$. Thus, just for simplification and into the context of this paper, we refer such $\alpha$ as maximal cyclic order element in $\mathbb{F}_{q^n}$.

Furthermore, $G(x)\in \displaystyle{\mathcal{L}_n \left(\mathbb{F}_{q^n}\right)}$ is $\alpha-$\emph{cyclically equivalent} to $F(x)$ if $S_{\alpha}^{ml}(G(x))\equiv F(x) \mod x^{[n]} -x$, for some $1\leq m\leq q-1$ and $1\leq l\leq n$.

\begin{corollary}\label{corordermax}
Given $F(x)\in \displaystyle{\mathcal{L}_n \left(\mathbb{F}_{q^n}\right)}$ a linear permutation and $\alpha \in \mathbb{F}^{*}_{q^n}$ a maximal cyclic order element, then the $\alpha-$cyclic equivalence class of $F(x)$ yields $(q-1)n$ distinct linear permutations in $\displaystyle{\mathcal{L}_n \left(\mathbb{F}_{q^n}\right)}$. \end{corollary}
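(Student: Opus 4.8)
The plan is to deduce the count directly from the cyclic-order computation of Theorem~\ref{ordciclica}. First I would record that, since $\alpha=\beta^l$ is a maximal cyclic order element, we have $\gcd(l,q-1)=1$; consequently the least positive integer $t$ with $lt\equiv 0 \pmod{q-1}$ is $t=q-1$. Feeding this into Theorem~\ref{ordciclica} shows that the $\alpha-$cyclic order of $F(x)$ equals $m=(q-1)n$, i.e. $(q-1)n$ is the \emph{least} positive integer for which $S_\alpha^{(q-1)n}(F(x))\equiv F(x) \bmod x^{[n]}-x$.

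I would then describe the $\alpha-$cyclic equivalence class of $F(x)$ as the orbit
\[
\left\{F(x),\, S_\alpha(F(x)),\, S_\alpha^2(F(x)),\, \ldots,\, S_\alpha^{(q-1)n-1}(F(x))\right\},
\]
which has at most $(q-1)n$ members because applying $S_\alpha$ once more closes the cycle back to $F(x)$. Every listed element is a linear permutation: by Proposition~\ref{deslciclico} the image $S_\alpha(H(x))=H(x)\circ\alpha x^{[1]}$ of a linear permutation $H(x)$ is again a linear permutation, so an immediate induction shows each $S_\alpha^{j}(F(x))$ is one.

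The crux is to prove the $(q-1)n$ iterates are pairwise distinct, so the orbit attains full size. For this I would first note that $S_\alpha$ is a \emph{bijection} of $\mathcal{L}_n\left(\mathbb{F}_{q^n}\right)$: because $\alpha x^{[1]}$ is an invertible linear permutation, right composition with it is injective, hence bijective on the finite set $\mathcal{L}_n\left(\mathbb{F}_{q^n}\right)$. Suppose now that $S_\alpha^{i}(F(x))\equiv S_\alpha^{j}(F(x))$ with $0\le i<j\le (q-1)n-1$. Applying $S_\alpha^{-i}$ yields $F(x)\equiv S_\alpha^{j-i}(F(x))$ with $0<j-i<(q-1)n$, contradicting the minimality of the $\alpha-$cyclic order established in the first step. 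Hence the iterates are distinct, and the equivalence class consists of exactly $(q-1)n$ distinct linear permutations.

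I do not expect a substantive obstacle here, since the argument is essentially Theorem~\ref{ordciclica} combined with the standard principle that a least period forces the intermediate iterates to be distinct. The one point I would be careful to make explicit is the invertibility of $S_\alpha$, on which the minimal-period argument tacitly depends; it follows from $\alpha x^{[1]}$ being a linear permutation together with the finiteness of $\mathcal{L}_n\left(\mathbb{F}_{q^n}\right)$.
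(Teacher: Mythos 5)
Your proof is correct and follows exactly the route the paper intends: the paper states Corollary~\ref{corordermax} without proof, treating it as immediate from Theorem~\ref{ordciclica} with $t=q-1$ when $\gcd(l,q-1)=1$, and your argument simply makes that derivation explicit. Your added remark on the invertibility of $S_\alpha$ (needed to pass from least period to pairwise distinctness of the iterates) is a worthwhile detail the paper leaves tacit.
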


\begin{proposition}\label{inverdescic}
Let $F(x)\in\mathcal{L}_n \left(\mathbb{F}_{q^n}\right)$ be a linear permutation, in which $\gcd(n,q-1)=1$. If its $t-$th $\alpha-$cyclic shift is $G(x)\equiv S^{t}_{\alpha}(F(x))$, for $\alpha \in \mathbb{F}^{*}_{q}$ that is a primitive element, then the corresponding compositional inverse is $G^{-1} (x)\equiv S^{(q-1)n - t}_{\alpha}(F^{-1}(x))$.
\end{proposition}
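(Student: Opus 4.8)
The plan is to work inside the group $\mathcal{P}_n(\mathbb{F}_{q^n})$ of linear permutations under composition and to read the $\alpha$-cyclic shift as right-composition by the fixed permutation $A:=\alpha x^{[1]}$. First I would record the explicit shape of the iterated shift. Since $\alpha\in\mathbb{F}_q^{*}$ we have $\alpha^{[i]}=\alpha$ for every $i$, so the computation already performed in the proof of Theorem~\ref{ordciclica} specializes to $S^{s}_{\alpha}(H(x))=H(x)\circ(\alpha x^{[1]})^{\circ s}=H(x)\circ \alpha^{s}x^{[s]}$ for any linear $H$; in particular $G=S^{t}_{\alpha}(F)=F\circ A^{\circ t}$ with $A^{\circ t}=\alpha^{t}x^{[t]}$, and likewise $S^{(q-1)n-t}_{\alpha}(F^{-1})=F^{-1}\circ A^{\circ((q-1)n-t)}$.

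Next I would pin down the compositional order of $A$. Because $\alpha$ is primitive in $\mathbb{F}_q^{*}$ it has multiplicative order $q-1$, and $A^{\circ s}=\alpha^{s}x^{[s]}$ reduces to $x$ modulo $x^{[n]}-x$ exactly when $n\mid s$ and $(q-1)\mid s$. Invoking the hypothesis $\gcd(n,q-1)=1$, the least such $s$ is $(q-1)n$, so $A^{\circ(q-1)n}=x$ and $\langle A\rangle$ is cyclic of order $(q-1)n$; this is precisely the maximal $\alpha$-cyclic order furnished by Theorem~\ref{ordciclica}. Consequently, inside $\langle A\rangle$ one has $(A^{\circ t})^{-1}=A^{\circ((q-1)n-t)}$. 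The formal inverse is then immediate from the order-reversing rule for compositional inverses: $G^{-1}=(F\circ A^{\circ t})^{-1}=(A^{\circ t})^{-1}\circ F^{-1}=A^{\circ((q-1)n-t)}\circ F^{-1}$.

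The remaining, and genuinely delicate, step is to recognize this as $S^{(q-1)n-t}_{\alpha}(F^{-1})=F^{-1}\circ A^{\circ((q-1)n-t)}$, i.e.\ to commute the factor $A^{\circ((q-1)n-t)}=\alpha^{(q-1)n-t}x^{[(q-1)n-t]}$ past $F^{-1}$. This is where I expect the main obstacle to sit, because composition is non-commutative in general. The scalar part causes no trouble, since $\alpha^{(q-1)n-t}\in\mathbb{F}_q$ commutes with any linear map by the linearity property (ii); the Frobenius part $x^{[\,\cdot\,]}$ commutes with $F^{-1}$ precisely in the $\mathbb{F}_q$-coefficient setting that is standing throughout this paper. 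The cleanest way I would justify it is via Lemma~\ref{lemma2}: on $\mathbb{F}_q$-coefficient linear polynomials, symbolic multiplication corresponds to ordinary multiplication of conventional $q$-associates, and the latter is commutative; hence $F$ (and $F^{-1}$) commute with every power $A^{\circ s}$, the reordering is automatic, and one gets $G^{-1}=F^{-1}\circ A^{\circ((q-1)n-t)}=S^{(q-1)n-t}_{\alpha}(F^{-1})$, as claimed. In short, the whole argument reduces to the fact that the shift operators generated by $\alpha\in\mathbb{F}_q^{*}$ act through the commutative algebra $R_{q,n}$, together with the order computation $A^{\circ(q-1)n}=x$ that converts the negative exponent $-t$ into the complementary amount $(q-1)n-t$.
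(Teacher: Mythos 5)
Your argument follows the same route as the paper's own proof: write $G=F\circ A^{\circ t}$ with $A=\alpha x^{[1]}$, compute that $A$ has compositional order $(q-1)n$ using $\gcd(n,q-1)=1$, invert to get $G^{-1}=A^{\circ((q-1)n-t)}\circ F^{-1}$, and then commute the power of $A$ past $F^{-1}$. You correctly single out the commutation as the only delicate step, and you are in fact more careful about it than the paper, which simply asserts $\alpha x^{[1]}\in Z\left(\mathcal{L}_n\left(\mathbb{F}_{q^n}\right)\right)$. That assertion is false for $n>1$: the center of $\mathcal{L}_n\left(\mathbb{F}_{q^n}\right)$ (a full matrix algebra over $\mathbb{F}_q$) is $\left\{cx:c\in\mathbb{F}_q\right\}$, and concretely $x^{[1]}\circ\beta x=\beta^{q}x^{[1]}\neq\beta x^{[1]}=\beta x\circ x^{[1]}$ whenever $\beta\in\mathbb{F}_{q^n}\setminus\mathbb{F}_q$.

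The gap in your write-up is that the justification you give for the commutation --- Lemma~\ref{lemma2} and the commutativity of $R_{q,n}$ --- is only available when $F$ has coefficients in $\mathbb{F}_q$, and your claim that the ``$\mathbb{F}_q$-coefficient setting is standing throughout the paper'' is not accurate here: the proposition is stated for $F\in\mathcal{L}_n\left(\mathbb{F}_{q^n}\right)$, and Section~\ref{sectioni} works in that generality. In that generality the step genuinely fails, and so does the proposition: take $q=3$, $n=3$, $\alpha=2$, $t=1$ and $F(x)=\beta x$ with $\beta\in\mathbb{F}_{27}\setminus\mathbb{F}_3$; then $G(x)=2\beta x^{[1]}$, $S^{5}_{\alpha}\left(F^{-1}(x)\right)=2\beta^{-1}x^{[2]}$, and composing them gives $\beta^{-2}x\neq x$. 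So your proof (exactly like the paper's, once its centrality claim is unwound) establishes the result only under the additional hypothesis $F\in\mathcal{L}_n\left(\mathbb{F}_q\right)$, or more generally that $F$ commutes with $x^{[1]}$. Under that restriction your argument is complete and coincides in substance with the paper's; you should state the restriction explicitly rather than appeal to a convention that is not in force in this section.
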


\begin{proof}
Calling $T(x)=S^{(q-1)n - t}_{\alpha}(F^{-1}(x))$ and $N=(q-1)n$, we have
\begin{equation}\label{eq11}
G(T(x))\equiv
F(x)\circ \underbrace{\alpha x^{[1]}\ldots \circ \alpha x^{[1]}}_{t-times}\circ F^{-1}(x)\circ \underbrace{\alpha x^{[1]}\ldots \alpha x^{[1]}}_{N-t-times}
\end{equation}
Since $\alpha x^{[1]}\in Z\left(\mathcal{L}_n \left(\mathbb{F}_{q^n}\right)\right)$, the center of the ring $\mathcal{L}_n \left(\mathbb{F}_{q^n}\right)$, the Equation~\eqref{eq11} can be rewritten as 
\begin{eqnarray}
G(T(x))&\equiv& F(x)\circ F^{-1}(x)\circ \underbrace{\alpha x^{[1]}\circ \ldots \circ \alpha x^{[1]}}_{(q-1)n - times}\nonumber\\
&\equiv& F(x)\circ F^{-1} (x)\nonumber\\
&\equiv& x \mod x^{[n]}-x
\end{eqnarray}
once $F^{-1}(x)\circ \underbrace{\alpha x^{[1]}\circ \ldots \circ \alpha x^{[1]}}_{(q-1)n - times}\equiv S_{\alpha} ^{(q-1)n}\left(F^{-1}(x)\right)\equiv F^{-1} (x)$ by Corollary~\ref{corordermax}. Thus, indeed, $T(x)\equiv G^{-1} (x)\mod x^{[n]}-x$.
\end{proof}

As a simple but important observation used on next Section, if $F(x)\in\mathcal{L}_n \left(\mathbb{F}_{q^n}\right)$ is a linear permutation, then the compositional inverse of $S(F(x))$ is $S^{n-1}(F^{-1}(x))$. 

\begin{corollary}
In addition of the hypothesis of the Proposition~\ref{inverdescic}, let $F(x)\in\mathcal{L}_n \left(\mathbb{F}_{q^n}\right)$ be an linear involution, in which $2|(q-1)n$. Then $S^{\frac{(q-1)n}{2}}_{\alpha}(F(x))$ is also an linear involution in $\mathcal{L}_n \left(\mathbb{F}_{q^n}\right)$.
\end{corollary}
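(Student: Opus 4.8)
The plan is to reduce the whole statement to a single application of Proposition~\ref{inverdescic}, exploiting the symmetry of the shift exponent about its midpoint. First I would set $N=(q-1)n$ and, since $2\mid N$ by hypothesis, introduce the integer $t=\frac{N}{2}$ together with the polynomial $G(x)\equiv S^{t}_{\alpha}(F(x))$. Because $\alpha\in\mathbb{F}^{*}_{q}$ is a primitive element and $F(x)$ is a linear permutation (an involution is in particular a permutation), $G(x)$ is again a linear permutation: it is the composition of $F(x)$ with $t$ copies of the linear permutation $\alpha x^{[1]}$ in the sense of Proposition~\ref{deslciclico}, so it lies in $\mathcal{P}_n\left(\mathbb{F}_{q^n}\right)$.

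Next I would read off the compositional inverse of $G(x)$ directly from Proposition~\ref{inverdescic}, whose standing hypotheses ($\gcd(n,q-1)=1$ and $\alpha$ a primitive element of $\mathbb{F}^{*}_{q}$) are assumed to carry over. That proposition gives
\begin{equation}
G^{-1}(x)\equiv S^{N-t}_{\alpha}\left(F^{-1}(x)\right)\mod x^{[n]}-x.
\end{equation}
The key step — and really the only thing that makes the corollary work — is the fixed-point identity $N-t=N-\frac{N}{2}=\frac{N}{2}=t$, so that the shift exponent is left unchanged. Combining this with the involution hypothesis $F^{-1}(x)\equiv F(x)$ yields
\begin{equation}
G^{-1}(x)\equiv S^{t}_{\alpha}\left(F^{-1}(x)\right)\equiv S^{t}_{\alpha}\left(F(x)\right)\equiv G(x)\mod x^{[n]}-x,
\end{equation}
so $G(x)$ coincides with its own compositional inverse and is therefore a linear involution in $\mathcal{L}_n\left(\mathbb{F}_{q^n}\right)$.

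I do not anticipate a genuine obstacle: the substantive content is entirely packaged inside Proposition~\ref{inverdescic}, and the corollary is in essence the observation that the midpoint exponent $\frac{(q-1)n}{2}$ is the unique fixed point of the involution $t\mapsto (q-1)n-t$ on exponents. The only points requiring care are bookkeeping ones: confirming that the hypotheses of Proposition~\ref{inverdescic} are genuinely in force (in particular that $\alpha$ is taken primitive in $\mathbb{F}^{*}_{q}$ and that $\gcd(n,q-1)=1$), and checking that $t=\frac{(q-1)n}{2}$ is a legitimate nonnegative integer exponent for the shift operator, which is exactly what the divisibility condition $2\mid(q-1)n$ guarantees.
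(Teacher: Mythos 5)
Your proposal is correct and follows essentially the same route as the paper: the paper's one-line proof simply invokes Equation~\eqref{eq11} with $t=\frac{(q-1)n}{2}$, which is exactly your observation that Proposition~\ref{inverdescic} gives $G^{-1}(x)\equiv S^{N-t}_{\alpha}\left(F^{-1}(x)\right)$ and that $N-t=t$ at the midpoint, so the involution hypothesis $F^{-1}(x)\equiv F(x)$ forces $G^{-1}(x)\equiv G(x)$. Your version just spells out the bookkeeping the paper leaves implicit.
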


\begin{proof}
It is clear from the Equation~\eqref{eq11} and $\displaystyle{t=\frac{(q-1)n}{2}}$.
\end{proof}

According to this section, it is possible to obtain several linear permutations in a trivial way; just using the $\alpha-$cyclic shifts of a given linear permutation. This construction is effective for the results proposed on next section, since it will be needed to use conventional $q-$associates of known linear permutations in order to get others, this time in a non-trivial way. The same construction can be applied in to order to yield their respective compositional inverses and, once again, the $\alpha-$cyclic shifts may develop an important role. In particular, these ideas may be applied to involution constructions.
\section{A Linear Permutation Construction Based on Primitive Idempotents}\label{construcao}
Since linear polynomials in $\mathcal{L}_n \left(\mathbb{F}_{q^n}\right)$ can be seen as linear operators over $\mathbb{F}_{q^n}$, notably it is known they are linear permutations if and only if  their kernels are trivial. In the particular case in which their coefficients are in $\mathbb{F}_{q}$, by Theorem~\ref{divisao} this is equivalent to state their respective $q-$conventional associates and $x^{n}-1\in \mathbb{F}_q [x]$ are coprimes. This well-known consideration will be taken during all this section.

It is worth recalling (See page 3) $\displaystyle{R_{q,n}:=\frac{\mathbb{F}_q [x]}{\left\langle x^n -1 \right\rangle}}$, in which $\gcd(q,n)=1$.

Next, we present the main result of this work.

\begin{theorem}\label{teoppidemp}
Let $F(x)\in \mathcal{L}_n \left(\mathbb{F}_{q}\right)$, $f(x)$ be its corresponding conventional $q-$associate and $E:=\left\{e_1 (x),...,e_t (x)\right\}$ the family of the primitive idempotents in $R_{q,n}$. $F(x)$ is a linear permutation over $\mathbb{F}_{q^n}$ if and only if 
\begin{equation}
f(x)e_i (x) \not\equiv 0 \mod x^n -1,
\end{equation}
for all $1\leq i \leq t$.
\end{theorem}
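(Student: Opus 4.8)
The plan is to reduce the statement to a ring-theoretic fact about units in $R_{q,n}$ and then read that fact off the primitive-idempotent decomposition. First I would record the characterization already flagged in the paragraph preceding the theorem: because $F(x)$ has coefficients in $\mathbb{F}_q$, Theorem~\ref{divisao} together with the Rank--Nullity observation tells us that $F(x)$ is a linear permutation over $\mathbb{F}_{q^n}$ if and only if its conventional $q$-associate $f(x)$ is coprime to $x^n-1$ in $\mathbb{F}_q[x]$. Since $R_{q,n}=\mathbb{F}_q[x]/\langle x^n-1\rangle$, the condition $\gcd(f(x),x^n-1)=1$ is precisely the condition that the class of $f(x)$ be a unit in $R_{q,n}$. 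So the entire theorem reduces to the equivalence: $f(x)$ is a unit in $R_{q,n}$ if and only if $f(x)e_i(x)\not\equiv 0 \bmod x^n-1$ for every $1\leq i\leq t$.

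To establish this equivalence I would exploit the decomposition $R_{q,n}\cong \mathbb{F}_q(\xi_1)\oplus\cdots\oplus\mathbb{F}_q(\xi_t)$ from~\eqref{tcr}, in which each summand is the simple component $M_i=e_i(x)R_{q,n}$ generated by the primitive idempotent $e_i(x)$. Every element $a\in R_{q,n}$ splits as $a=ae_1+\cdots+ae_t$ by~\eqref{soma1}, with $ae_i\in M_i$ its $i$-th coordinate. Because each $M_i$ is a field (a finite extension of $\mathbb{F}_q$) whose identity element is $e_i(x)$, the coordinate $f(x)e_i(x)$ is invertible in $M_i$ exactly when it is nonzero. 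Using the orthogonality~\eqref{eq7}, a two-sided inverse of $f(x)$ in $R_{q,n}$ can then be assembled coordinate-by-coordinate precisely when each $f(x)e_i(x)$ is nonzero; conversely, if some $f(x)e_i(x)\equiv 0$ then $f(x)$ annihilates the nonzero element $e_i(x)$ and cannot be a unit. This delivers both directions at once.

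An equivalent, more computational route I would keep in reserve evaluates at the $n$-th roots of unity. Writing the roots of $x^n-1$ as $\alpha^0,\dots,\alpha^{n-1}$, which are distinct since $\gcd(q,n)=1$, Theorem~\ref{idempcodc} identifies $e_i(x)$ with the generating idempotent of the minimal ideal $\langle (x^n-1)/f_i(x)\rangle$, so that $e_i(\alpha^j)=1$ when $\alpha^j$ is a root of $f_i(x)$ and $e_i(\alpha^j)=0$ otherwise (the two values being forced by idempotency). Then $f(x)e_i(x)\equiv 0 \bmod x^n-1$ is equivalent to $f$ vanishing at every root of $f_i$, and since $f_i$ is irreducible over $\mathbb{F}_q$ and $f\in\mathbb{F}_q[x]$, this happens iff $f_i(x)\mid f(x)$. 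Hence $f(x)e_i(x)\not\equiv 0$ for all $i$ iff no irreducible factor $f_i$ of $x^n-1$ divides $f$, i.e. iff $\gcd(f(x),x^n-1)=1$, recovering the same conclusion.

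The step I expect to be the main obstacle is the precise bookkeeping of the primitive idempotents: justifying that $f(x)e_i(x)$ really is the $i$-th coordinate of $f(x)$ in the decomposition and that $e_i(x)$ is the identity of the field $M_i$, or, on the computational route, pinning down the evaluation pattern $e_i(\alpha^j)\in\{0,1\}$ via Theorem~\ref{idempcodc}. Once that correspondence is nailed down, both implications are immediate from the elementary fact that an element of a finite direct sum of fields is a unit exactly when every coordinate is nonzero, so no genuinely hard estimate remains.
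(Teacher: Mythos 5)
Your proposal is correct and follows essentially the same route as the paper: both reduce the statement to the coprimality of $f(x)$ with $x^n-1$ and then read the answer off the primitive-idempotent decomposition of $R_{q,n}$. The only difference is packaging --- the paper argues both directions by contradiction, using $\gcd$'s and Theorem~\ref{idempcodc} to produce an idempotent $e(x)$ with $f(x)\in\langle e(x)\rangle$, whereas you invoke the structural fact that an element of a finite direct sum of fields is a unit exactly when all its idempotent projections are nonzero; your reserve argument via evaluation at the $n$-th roots of unity is also sound.
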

\begin{proof}
Let $f(x)$ be the conventional $q-$associate of $F(x)$. According to the discussion above, showing that $F(x)$ is a linear permutation over $\mathbb{F}_{q^n}$ is equivalent to show that\linebreak $\gcd\left(f(x), x^n -1\right)=1$. 

Suppose $F(x)$ is not a linear permutation over $\mathbb{F}_{q^n}$, namely, there exists $1\neq g(x)\in R_{q,n}$ so that $\gcd\left(f(x), x^n -1\right)= g(x)$. From Theorem~\ref{idempcodc}, there also exists an unique idempotent $1\neq e(x)\in R_{q,n}$ so that $f(x) \in \left\langle g(x) \right\rangle=\left\langle e(x) \right\rangle$.  Without loss of generality, $e(x)=e_1 (x) + e_2 (x)+...+e_l (x)$, $1\leq l<t$, is written as a sum of primitive idempotents. Since
\begin{equation}
f(x)\equiv h(x)e(x)\mod x^n -1 , 
\end{equation}
for some $h(x) \in R_{q,n}$, then
\begin{equation}
f(x)e_t (x)\equiv h(x)e(x)e_t (x) \equiv 0\mod x^n -1, 
\end{equation}
(See~\eqref{eq7}) contradicting the hypothesis.

Conversely, suppose there is a primitive idempotent $e_i (x) \in E$ so that $f(x)e_i (x)\equiv 0 \mod x^n -1.$ Since $(f(x),x^n -1)=1$ and $x^n -1 |f(x)e_i (x)$, then $x^n -1| e_i (x)$, which is impossible, since the degree of $e_i (x)$ is less than $n$ and $e_i (x)\not\equiv 0$.
\end{proof}

The following Corollary provides a simple criterion when a linear polynomial is not a linear permutation.

\begin{corollary}\label{corpp}
Let $F(x)\in \mathcal{L}_n \left(\mathbb{F}_q \right)$. If the sum of the coefficients of $F(x)$ is equivalent to $0$ in $\mathbb{F}_q$, then $F(x)$ is not a linear permutation over $\mathbb{F}_{q^{n}}$.
\end{corollary}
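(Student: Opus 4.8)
The plan is to reduce the claim to the primitive-idempotent criterion of Theorem~\ref{teoppidemp} by exhibiting a single primitive idempotent that annihilates the conventional $q$-associate of $F(x)$. First I would observe that the sum of the coefficients of $F(x)$ is exactly the value of its $q$-associate at $1$: writing $F(x)=\sum_{i=0}^{n-1}f_i x^{[i]}$ and $f(x)=\sum_{i=0}^{n-1}f_i x^i$, the hypothesis $\sum_{i=0}^{n-1}f_i\equiv 0$ in $\mathbb{F}_q$ is precisely the statement $f(1)=0$. Hence $x=1$ is a root of $f(x)$, equivalently $(x-1)\mid f(x)$ in $\mathbb{F}_q[x]$.

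Next I would locate the simple component of $R_{q,n}$ attached to this root. Because $\gcd(q,n)=1$, the polynomial $x^n-1$ is separable and $1$ is one of its simple roots, so $x-1$ occurs among the distinct irreducible factors, say $f_1(x)=x-1$, yielding a simple component $\mathbb{F}_q[x]/\langle x-1\rangle\cong\mathbb{F}_q$. Let $e_1(x)$ be the corresponding primitive idempotent in the family $E$. Under the decomposition~\eqref{tcr}, $e_1(x)$ maps to the tuple equal to $1$ in the first coordinate and $0$ elsewhere; consequently $e_1$ takes the value $1$ at the root $1$ and vanishes at every other $n$-th root of unity.

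The key step is then immediate: since $f(1)=0$, the product $f(x)e_1(x)$ vanishes at $x=1$ (because $f(1)=0$) and at every remaining $n$-th root of unity (because $e_1$ vanishes there), so $f(x)e_1(x)$ vanishes at all $n$ distinct roots of the separable polynomial $x^n-1$, giving $f(x)e_1(x)\equiv 0\mod x^n-1$. This can equally be read off from Theorem~\ref{idempcodc}, which links the vanishing of the idempotent at a root to membership in the associated ideal. Invoking the contrapositive direction of Theorem~\ref{teoppidemp}, the existence of one primitive idempotent $e_1$ with $f(x)e_1(x)\equiv 0$ forces $F(x)$ to fail to be a linear permutation over $\mathbb{F}_{q^n}$, which is the desired conclusion.

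I expect the only delicate point to be the bookkeeping that identifies the sum of coefficients with $f(1)$ and then matches the root $x=1$ to a genuine primitive idempotent of $R_{q,n}$; everything afterwards is a direct application of Theorem~\ref{teoppidemp}. A shorter route would bypass idempotents entirely, noting that $x-1$ is a common divisor of $f(x)$ and $x^n-1$, so $\gcd(f(x),x^n-1)\neq 1$ and $F(x)$ is not a permutation by Theorem~\ref{divisao} together with the Rank--Nullity remark; but since the corollary is meant to illustrate Theorem~\ref{teoppidemp}, I would present the idempotent argument.
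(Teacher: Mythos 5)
Your proof is correct and follows essentially the same route as the paper: both arguments exhibit the primitive idempotent attached to the factor $x-1$ of $x^n-1$ (explicitly, $e(x)=\frac{1}{n}\sum_{i=0}^{n-1}x^i$), show that it annihilates the conventional $q$-associate $f(x)$ precisely when the coefficient sum vanishes, and conclude via Theorem~\ref{teoppidemp}. The only cosmetic difference is that you verify $f(x)e(x)\equiv 0 \bmod x^n-1$ by evaluating at the $n$ distinct roots of unity (using $f(1)=0$), whereas the paper multiplies the polynomials out explicitly; both are valid.
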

\begin{proof}
By Theorem~\ref{teoppidemp}, $F(x)$ is not a linear permutation, since there is at least one primitive idempotent $e_i (x)$ of $R_{q,n}$ so that the product $f(x)e_i (x)\equiv 0 \mod x^n -1$. Consider the polynomial $\displaystyle{e(x)=\frac{1}{n}\sum_{i=0}^{n-1} x^i}$. It is well-known~\cite[Lemma 3.6.6]{polcino} that $e(x)$ is an idempotent in $R_{q,n}$, which proof we reproduce here, just for completeness 
\begin{eqnarray}
e(x)e(x)&\equiv&\left(\frac{1}{n}\sum_{i=0}^{n-1} x^i\right)\left(\frac{1}{n}\sum_{i=0}^{n-1} x^i\right)\nonumber\\
&\equiv&\frac{1}{n^2}\sum_{i=0}^{n-1}x^i \left(\sum_{i=0}^{n-1} x^i\right) \nonumber\\
&\equiv& \frac{1}{n^2}\sum_{i=0}^{n-1} n x^i  \nonumber \\
&\equiv& \frac{1}{n}\sum_{i=0}^{n-1}  x^i \nonumber\\
&\equiv& e(x) \mod x^n -1.
\end{eqnarray}
Further, since~\cite[Proposition 3.6.7]{polcino} $\left\langle e(x)\right\rangle\cong \mathbb{F}_q\left(G/G\right)=\mathbb{F}_q$, then $\left\langle e(x)\right\rangle$ is a simple component and, consequently, $e(x)$ is a primitive idempotent in $R_{q,n}$. Hence, $\displaystyle{F(x)=\sum_{i=0}^{n-1}f_i x^{[i]}}$ is not a linear permutation over $\mathbb{F}_{q^n}$ if $f(x)e(x) \equiv 0 \mod x^n -1$, i.e., 
\begin{eqnarray}
\left(\sum_{i=0}^{n-1}f_i x^{i}\right)\left(\frac{1}{n}\sum_{i=0}^{n-1} x^i\right)&\equiv& \sum_{j=0}^{n-1} \left(\sum_{i=0}^{n-1} f_i\right) x^{j}\equiv 0 \mod x^n -1, \nonumber
\end{eqnarray}
namely, 
\begin{equation}
\sum_{i=0}^{n-1} f_i \equiv 0 \mod q     ,
\end{equation}
and the result follows.
\end{proof}

Based on Theorem~\ref{teoppidemp}, we provide explicit families of linear permutations over $\mathbb{F}_{q^n}$, in which $q$ and $n$ must satisfy some prescribed conditions. In this work, linear permutations are obtained using the construction of primitive idempotents in group algebras given in~\cite{ferraz}, which 
uses the group structure in order to get them in an uncomplicated way. Before presenting it, we take into consideration the natural $\mathbb{F}_q-$ algebra isomorphism between the group algebra $\mathbb{F}_q C$ and $R_{q,n}$, in which $C= \langle c \rangle$ is a $n$-order cyclic group generated by $c$, i.e.
\begin{equation}\label{isomorphismo}
    \begin{array}{ccc}
        \varphi:\mathbb{F}_q C &\rightarrow &R_{q,n}  \\
        f_0 +...+f_{n-1}c^{n-1}&\mapsto &  f_0+...+f_{n-1}x^{n-1}.
    \end{array}
\end{equation}

On next lemma, given $C_i$ a subgroup of $C$, define $\displaystyle{\widehat{C_i}=\frac{1}{\left| C_i \right|}\sum_{c \in C_i} c \in \mathbb{F}_q C}$. The notation of such lemma will be slightly altered in order to match with that one used in this work, that is, we adapt it to polynomial context.

\begin{lemma}\cite[Lemma 3]{ferraz}\label{idempotentes}
Let $\mathbb{F}_q$ be a finite field, let $p$ be a rational prime and let $C=\langle c \rangle$ be a cyclic group of order $p^m$, $m\geq 1$. Let
\begin{equation}
    C=C_0 \supseteq C_1 \supseteq C_m =\{1\}
\end{equation}
be the descending chain of all subgroups of $C$. Then the elements 
\begin{equation}
e_0 =\widehat{C}\mbox{ and } e_i = \widehat{C_i} -\widehat{C_{i-1}}, 1\leq i\leq m
\end{equation}
form a set of orthogonal idempotents of $\mathbb{F}_q C$ so that $e_0 +e_1 +...+e_m =1$.
\end{lemma}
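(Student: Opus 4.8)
The plan is to carry out the entire argument inside the group algebra $\mathbb{F}_q C$ (the isomorphism $\varphi$ of~\eqref{isomorphismo} then transports the conclusion verbatim to $R_{q,n}$) and to reduce all three assertions to one ``collapse'' identity for the averaging elements $\widehat{H}$. First I would record the elementary properties of $\widehat{H}$ for an arbitrary subgroup $H\le C$. The element $\widehat{H}$ is well defined precisely because $|H|$ is a power of $p$ while $\gcd(q,p)=1$, so $|H|$ is invertible in $\mathbb{F}_q$; this is the only point where coprimality is used, and it is genuinely needed. The crucial elementary fact is that $h\,\widehat{H}=\widehat{H}$ for every $h\in H$, since left multiplication by $h$ merely permutes the summands of $\sum_{g\in H} g$. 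Averaging this over $h\in H$ gives $\widehat{H}^2=\widehat{H}$, and if $H\subseteq K$ the same observation applied to $\widehat{K}$ yields $\widehat{H}\,\widehat{K}=\widehat{K}=\widehat{K}\,\widehat{H}$, i.e.\ the larger group ``wins''. Because $C$ is cyclic of prime-power order, its subgroups form the single chain $C_0\supseteq C_1\supseteq\cdots\supseteq C_m$, so any two are nested; combining this with the previous identity gives the collapse rule
$$\widehat{C_a}\,\widehat{C_b}=\widehat{C_{\min(a,b)}},$$
the smaller index surviving because it labels the larger subgroup.

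With the collapse rule in hand the three claims become pure bookkeeping, and I would adopt the convention $\widehat{C_{-1}}:=0$ so that $e_i=\widehat{C_i}-\widehat{C_{i-1}}$ holds uniformly for $0\le i\le m$. For the partition of unity the partial sums telescope, $\sum_{i=0}^{k} e_i=\widehat{C_k}$, and taking $k=m$ gives $\sum_{i=0}^{m} e_i=\widehat{C_m}=\widehat{\{1\}}=1$ since $C_m=\{1\}$. For idempotency I would expand $e_i^2=(\widehat{C_i}-\widehat{C_{i-1}})^2$ and apply the collapse rule to the four products; both cross terms equal $\widehat{C_{i-1}}$, and everything reduces to $\widehat{C_i}-\widehat{C_{i-1}}=e_i$. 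For orthogonality with $i<j$, expanding $e_i e_j$ into four products and observing that $\min(i,j)=\min(i,j-1)=i$ while $\min(i-1,j)=\min(i-1,j-1)=i-1$ collapses the four terms to $\widehat{C_i},\widehat{C_i},\widehat{C_{i-1}},\widehat{C_{i-1}}$, which cancel in pairs to $0$ (the case $i=0$ being covered automatically by the convention $\widehat{C_{-1}}=0$).

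There is no serious obstacle here: the only real content is the collapse identity $\widehat{C_a}\widehat{C_b}=\widehat{C_{\min(a,b)}}$, after which idempotency, orthogonality, and summing to $1$ all fall out by direct cancellation. The two things I would be careful to pin down are (i) invoking $\gcd(q,p)=1$ to guarantee the $\widehat{C_i}$ exist at all, and (ii) keeping the inclusion direction straight — remembering that a \emph{larger} index labels a \emph{smaller} subgroup, so that the smaller index is the one that survives each product.
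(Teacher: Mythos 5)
The paper does not prove this lemma; it is quoted verbatim from \cite[Lemma 3]{ferraz}, so there is no in-paper argument to compare against. Your proof is correct and complete: the collapse identity $\widehat{C_a}\,\widehat{C_b}=\widehat{C_{\min(a,b)}}$ (valid because the subgroups of a cyclic $p$-group form a single chain, and because $h\widehat{H}=\widehat{H}$ for $h\in H$), together with the convention $\widehat{C_{-1}}=0$, does reduce idempotency, orthogonality, and the telescoping sum to $1$ to routine cancellations, and you correctly isolate $\gcd(q,p)=1$ as the one place where an invertibility hypothesis is needed so that the $\widehat{C_i}$ exist. This is essentially the standard argument (and the one in the cited reference), so nothing further is required.
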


On next Corollary, $U\left(\mathbb{Z}_{p^m}\right)$, $o(\cdot)$ and $\phi(\cdot)$ mean the subgroup of units in $\mathbb{Z}_{p^m}$, the multiplicative order of an element in $U\left(\mathbb{Z}_{p^m}\right)$ and the classical Euler's Totient function, respectively. Once more, we slightly altered the writing of this Corollary in order to preserve the same notation throughout the work.

\begin{corollary}\cite[Corollary 4]{ferraz}\label{corolarioidemp}
Let $\mathbb{F}_q$ be a finite field, and let $C$ be a cyclic group of order $p^m$.
Then, the set of idempotents given in Lemma~\ref{idempotentes} is the set of primitive idempotents of $\mathbb{F}_q C$ if
and only if one of the following holds:
\begin{itemize}
\item[(i)] $p = 2$, and either $m = 1$ and $q$ is odd or $m = 2$ and $q \equiv 3 \mod 4$ or
\item[(ii)]  $p$ is an odd prime and $o(q) = \phi\left(p^m \right )$ in $U\left(\mathbb{Z}_{p^m}\right)$.
\end{itemize}
\end{corollary}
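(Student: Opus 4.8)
The plan is to reduce the statement to a count of simple components of $\mathbb{F}_q C$ and then to a purely number-theoretic condition on cyclotomic polynomials. First I would record that, by Lemma~\ref{idempotentes}, the elements $e_0,\ldots,e_m$ are orthogonal idempotents with $e_0+\cdots+e_m=1$, and each is nonzero (for $i\ge 1$ the summands $\widehat{C_i}$ and $\widehat{C_{i-1}}$ have different supports because $C_{i-1}\supsetneq C_i$). They therefore yield a direct decomposition $\mathbb{F}_q C=\langle e_0\rangle\oplus\cdots\oplus\langle e_m\rangle$ into $m+1$ nonzero ideals. Since $\mathbb{F}_q C$ is semisimple (each $|C_i|=p^{m-i}$ must be invertible in $\mathbb{F}_q$, so $\gcd(q,p)=1$ is forced by all the listed cases), every $\langle e_i\rangle$ is a sum of simple components, and $e_i$ is \emph{primitive} exactly when $\langle e_i\rangle$ is a single simple component. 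Consequently the whole set $\{e_0,\ldots,e_m\}$ consists of primitive idempotents if and only if the total number $t$ of simple components of $\mathbb{F}_q C$ equals $m+1$.

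Next I would compute $t$. Under the isomorphism $\varphi$ of~\eqref{isomorphismo} we have $\mathbb{F}_q C\cong R_{q,p^m}$, so by~\eqref{tcr} the number $t$ is the number of distinct irreducible factors of $x^{p^m}-1$ over $\mathbb{F}_q$. Factoring $x^{p^m}-1=\prod_{j=0}^m \Phi_{p^j}(x)$ (the divisors of $p^m$ being exactly $1,p,\ldots,p^m$) and using that $\Phi_{p^j}$ splits over $\mathbb{F}_q$ into $\phi(p^j)/o_{p^j}(q)$ irreducible factors of equal degree $o_{p^j}(q)$, where $o_{p^j}(q)$ denotes the multiplicative order of $q$ in $U(\mathbb{Z}_{p^j})$, I would write $t=\sum_{j=0}^m \phi(p^j)/o_{p^j}(q)$, the $j=0$ term being $1$. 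Each summand is a positive integer and there are $m+1$ of them, so $t=m+1$ holds if and only if every summand equals $1$, that is, if and only if $o_{p^j}(q)=\phi(p^j)$ for every $1\le j\le m$ (equivalently, each $\Phi_{p^j}$ is irreducible over $\mathbb{F}_q$).

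Finally I would translate the requirement ``$o_{p^j}(q)=\phi(p^j)$ for all $1\le j\le m$'' into the dichotomy (i)--(ii) via the structure of the groups $U(\mathbb{Z}_{p^j})$. For $p$ an odd prime, $U(\mathbb{Z}_{p^m})$ is cyclic and the reductions $U(\mathbb{Z}_{p^m})\to U(\mathbb{Z}_{p^j})$ are surjective homomorphisms, so a generator modulo $p^m$ reduces to a generator modulo $p^j$; hence the single condition $o(q)=\phi(p^m)$ already forces $o_{p^j}(q)=\phi(p^j)$ for all smaller $j$, while the converse is immediate on taking $j=m$. This is precisely case (ii). For $p=2$, $U(\mathbb{Z}_{2^m})$ is cyclic only when $m=1$ or $m=2$, so for $m\ge 3$ no unit attains order $\phi(2^m)$ and the condition fails; the surviving cases reduce to $\Phi_2=x+1$ (always irreducible, needing only $q$ odd) for $m=1$, and $\Phi_4=x^2+1$ (irreducible over $\mathbb{F}_q$ exactly when $-1$ is a nonsquare, i.e.\ $q\equiv 3\bmod 4$) for $m=2$. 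This is case (i).

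I expect the main obstacle to be the last paragraph: the number-theoretic bookkeeping that converts a uniform requirement on all the cyclotomic polynomials $\Phi_{p^j}$ into the clean, case-split criterion on the order of $q$ modulo $p^m$. The crux is that for odd $p$ primitivity modulo $p^m$ propagates downward to every $p^j$, collapsing $m$ separate conditions into one, together with the failure of $U(\mathbb{Z}_{2^m})$ to be cyclic for $m\ge 3$, which is exactly what rules out the higher $2$-power cases.
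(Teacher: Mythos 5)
Your argument is correct, but note that the paper itself offers no proof of this statement: it is imported verbatim as \cite[Corollary 4]{ferraz}, so there is no in-paper argument to compare against. Your derivation is a sound, self-contained reconstruction, and it follows essentially the route of the cited source: the orthogonal idempotents of Lemma~\ref{idempotentes} are all nonzero and sum to $1$, so they are all primitive exactly when the number $t$ of simple components of $\mathbb{F}_q C\cong R_{q,p^m}$ equals $m+1$; by the decomposition~\eqref{tcr}, $t$ is the number of distinct irreducible factors of $x^{p^m}-1=\prod_{j=0}^{m}\Phi_{p^j}(x)$ over $\mathbb{F}_q$, namely $\sum_{j=0}^{m}\phi(p^j)/o_{p^j}(q)$, which equals $m+1$ iff each $\Phi_{p^j}$ is irreducible; and the structure of $U(\mathbb{Z}_{p^j})$ (cyclic with surjective reductions for odd $p$; non-cyclic for $p=2$, $m\ge 3$) collapses these $m$ conditions into exactly the dichotomy (i)--(ii). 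The one point worth making explicit in a polished write-up is the first reduction: in a commutative semisimple algebra a decomposition $1=e_0+\cdots+e_m$ into nonzero orthogonal idempotents forces each $\langle e_i\rangle$ to be a nonempty direct sum of simple components, these sums being disjoint and exhaustive, so $t\ge m+1$ with equality iff every $e_i$ is primitive; you state this correctly but it is the hinge of the whole argument. Everything else (the count of irreducible factors of $\Phi_{p^j}$ as $\phi(p^j)/o_{p^j}(q)$, the propagation of primitivity of $q$ from $p^m$ down to $p^j$ for odd $p$, and the cases $\Phi_2=x+1$, $\Phi_4=x^2+1$ for $p=2$) checks out.
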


According to~\cite{ferraz}, it is also possible to describe all primitive idempotents of $\mathbb{F}_q C$, in which $C$ is $2p^m-$order cyclic group. See~\cite[Theorem 3.2]{ferraz}.

\begin{remark}
Based on isomorphism~\eqref{isomorphismo}, we will adopt the polynomial notation in order to present the primitive idempotents of $R_{q,p^m}$, consequently, families of linear permutations over $\mathbb{F}_{q^{p^m}}$. 

It is important to stress that there are more general constructions of primitive idempotents which may be adapted to this work, providing linear permutations in $\mathcal{L}_n \left(\mathbb{F}_q\right)$, in which $n$ is not restricted to $p^n$ or $2p^n$.
\end{remark}

\begin{example}\label{exemplo}
Let $F(x)=f_0x + f_{25} x^{[25]} + f_{124} x^{[124]} \in \mathcal{L}_{125}\left(\mathbb{F}_3 \right)$. Since $o(3)=100=\phi(125)$ in $U\left(\mathbb{Z}_{125} \right)$, then the primitive idempotents of $R_{3,125}$ are
\begin{itemize}
\item[(i)] $\displaystyle{e_0 (x) =\frac{1}{125}\sum_{i=0}^{124}x^i\equiv 2\sum_{i=0}^{124}x^i}$
\item[(ii)]$\displaystyle{e_1 (x) =\frac{1}{25}\sum_{i=0}^{24}x^{5i}-\frac{1}{125}\sum_{i=0}^{124}x^i\equiv \sum_{i=0}^{24}x^{5i}+\sum_{i=0}^{124}x^i}$
\item[(iii)]$\displaystyle{e_2 (x) =\frac{1}{5}\sum_{i=0}^{4}x^{25i}-\frac{1}{25}\sum_{i=0}^{24}x^{5i}\equiv 2\sum_{i=0}^{4}x^{25i}+\sum_{i=0}^{24}x^{5i}}$
\item[(iv)]$\displaystyle{e_3 (x) =1-\frac{1}{5}\sum_{i=0}^{4}x^{25i}\equiv 1+\sum_{i=0}^{4}x^{25i}}$.
\end{itemize}

Taking the conventional $3-$associate of $F(x)$, $f(x)=f_0 + f_{25} x^{25} + f_{124} x^{124}$, and based on the products $f(x)e_i (x)\not\equiv 0\mod x^{125} -1$, for all $0\leq i \leq 3$, the Table~\ref{tab:tablei} describes all respective linear permutations (linearized $3-$associates) over $\mathbb{F}_{3^{125}}$ which coefficients are in $\mathbb{F}_3$. Since one of the monomials and each linear permutation from the first column are not cyclically equivalent to each other, it is still possible to apply the $S^n (\cdot)-$operator in each one in order to get more $125$ linear permutations (repeating the other two monomials). 
%

\begin{table}[h!]
\centering
\caption{Some Linear Permutation over $\mathbb{F}_{3^{125}}$\label{tab:tablei}}
\begin{tabular}{c|c}
\hline
$x$ & $2x$ \\ \hline
$x^{[25]}$ &  $2x^{[25]}$\\ \hline
$x^{[124]}$&$2x^{[124]}$ \\ \hline
 $x^{[25]}+x$ & $2x^{[25]}+2x$ \\ \hline
$x^{[124]}+x$    & $2x^{[124]}+2x$  \\ \hline
$x^{[124]} + x^{[25]}$&$2x^{[124]} + 2x^{[25]}$   \\ \hline
$2x^{[124]}+x^{[25]}+x$ &$x^{[124]}+2x^{[25]}+2x$ \\ \hline
$x^{[124]}+x^{[25]}+2x$& $2x^{[124]}+2x^{[25]}+x$
\\ \hline
$x^{[124]}+2x^{[25]}+x$ & $2x^{[124]}+x^{[25]}+2x$ \\ \hline
\end{tabular}
\end{table}
\end{example}
%

\begin{corollary}\label{corpp1}
Let $\mathbb{F}_q$ be a finite field and $p$ an rational odd prime so that $o(q) = \phi\left(p^m \right )$ in $U\left(\mathbb{Z}_{p^m}\right)$. Given $\displaystyle{F(x)=\sum_{i=0}^{n-1} f_i x^{[i]} \in \mathcal{L}_{p^m}\left(\mathbb{F}_q\right)}$, if
\begin{itemize}
\item [(i)] $\displaystyle{\sum_{j=0}^{p^m -1}f_j\not\equiv 0}$ and \item[(ii)]
$\displaystyle{-p^{-m+i-1}\sum_{\substack{j=0,\\ p \nmid j}}^{p^{m-i+1} -1} f_{p^m -jp^{i-1}}+\left(p^{-m+i}-p^{-m+i-1}\right)\sum_{\substack{j=0,\\ p \mid j}}^{p^{m-i+1} -1} f_{p^m -jp^{i-1}}\not\equiv 0,}$ 
\end{itemize}
over $\mathbb{F}_q$, for all $1\leq i\leq m$ in which all subscripts are taken$\mod p^m$, then $F(x)$ is a linear permutation over $\mathbb{F}_{q^{p^m}}$.
\end{corollary}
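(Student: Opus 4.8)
The plan is to run the argument entirely through the master criterion, Theorem~\ref{teoppidemp}: the polynomial $F(x)$ is a linear permutation over $\mathbb{F}_{q^{p^m}}$ if and only if $f(x)e_i(x)\not\equiv 0 \mod x^{p^m}-1$ for every primitive idempotent $e_i(x)$ of $R_{q,p^m}$. Since $p$ is an odd prime and $o(q)=\phi(p^m)$ in $U(\mathbb{Z}_{p^m})$, hypothesis (ii) of Corollary~\ref{corolarioidemp} holds, so the orthogonal idempotents furnished by Lemma~\ref{idempotentes} are precisely the primitive ones. Transporting them through the isomorphism~\eqref{isomorphismo} and writing $\widehat{C_i}=p^{-(m-i)}\sum_{r=0}^{p^{m-i}-1}x^{rp^i}$, they read $e_0(x)=\widehat{C}$ and $e_i(x)=\widehat{C_i}-\widehat{C_{i-1}}$ for $1\le i\le m$. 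It therefore suffices to show that hypotheses (i) and (ii) force $f(x)e_i(x)\not\equiv 0$ for $i=0$ and for $1\le i\le m$, respectively; then the conclusion is immediate from Theorem~\ref{teoppidemp}.

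For the component $i=0$ I would simply reuse the short computation already carried out inside Corollary~\ref{corpp}: because $x^k\widehat{C}\equiv \widehat{C}$ for every $k$, one gets $f(x)e_0(x)\equiv\big(\sum_{j=0}^{p^m-1}f_j\big)\widehat{C}$, which is nonzero exactly when hypothesis (i) holds.

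For each $i$ with $1\le i\le m$ the key observation is that it is enough to produce a single nonzero coefficient of the product $f(x)e_i(x)$, and I would single out its constant term. Writing $e_i(x)=\sum_k a_k x^k$, the only nonzero $a_k$ occur at exponents $k=sp^{i-1}$ with $0\le s\le p^{m-i+1}-1$, and reading them off from $\widehat{C_i}-\widehat{C_{i-1}}$ gives $a_{sp^{i-1}}=p^{-m+i}-p^{-m+i-1}$ when $p\mid s$ (then $sp^{i-1}$ is a multiple of $p^i$) and $a_{sp^{i-1}}=-p^{-m+i-1}$ when $p\nmid s$. Reducing modulo $x^{p^m}-1$, the constant term of $f(x)e_i(x)$ is $\sum_{j=0}^{p^m-1}f_j\,a_{(-j)\bmod p^m}$; keeping only the indices $j\equiv p^m-sp^{i-1}$ and splitting according to $p\mid s$ or $p\nmid s$ reproduces verbatim the expression in hypothesis (ii). Hence (ii) makes this constant term nonzero, so $f(x)e_i(x)\not\equiv 0 \mod x^{p^m}-1$.

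Having established $f(x)e_i(x)\not\equiv 0$ for all $0\le i\le m$, Theorem~\ref{teoppidemp} closes the proof. I expect the main obstacle to be the combinatorial bookkeeping in the third step: correctly separating the exponents $sp^{i-1}$ that are multiples of $p^i$ (the case $p\mid s$) from those that are only multiples of $p^{i-1}$ (the case $p\nmid s$), carrying out the reduction $k\equiv -j \bmod p^m$ so that $f_j$ is paired with $f_{p^m-sp^{i-1}}$ (with subscripts read modulo $p^m$, so that $s=0$ contributes $f_0$), and verifying that the two coefficient values $-p^{-m+i-1}$ and $p^{-m+i}-p^{-m+i-1}$ attach to the correct sums. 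Everything outside that bookkeeping is a direct invocation of the results already in hand.
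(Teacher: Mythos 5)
Your proposal is correct and follows essentially the same route as the paper's proof: invoke Theorem~\ref{teoppidemp}, identify the primitive idempotents of $R_{q,p^m}$ via Corollary~\ref{corolarioidemp}, and force the constant (independent) term of each product $f(x)e_i(x)$ to be nonzero, which after the same $p\mid j$ versus $p\nmid j$ bookkeeping yields exactly conditions (i) and (ii). The only cosmetic difference is that for $i=0$ you observe the full product collapses to $\bigl(\sum_j f_j\bigr)\widehat{C}$, whereas the paper only inspects the constant coefficient $g_{0,0}$; both reduce to the same condition.
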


\begin{proof}
According to Theorem~\ref{teoppidemp}, we should demonstrate that the product of the conventional $q-$associate of $F(x)$, $f(x)$, with all primitive idempotents of $R_{q,p^m }$ are not equivalent to zero. From Corollary~\ref{corolarioidemp}-(ii), the idempotents
\begin{eqnarray}
e_0 (x) &=& \widehat{A_0}=p^{-m}\left(1+x+x^2+...+x^{p^m -1}\right)\mbox{ and}\nonumber\\
e_i (x)&=&\widehat{A_i}-\widehat{A_{i-1}}\nonumber\\
&=&p^{-m+i}\left(1+x^{p^i} +...+x^{p^i(p^{m-i} -1)}\right)\nonumber\\
&-&p^{-m+i-1}\left(1+x^{p^{i-1}} +...+x^{p^{i-1}(p^{m-i+1} -1)}\right)\nonumber\\
&=&\sum_{\substack{j=0,\\ p \nmid j}}^{p^{m-i+1} -1} -p^{-m+i-1}x^{jp^{i-1}}+\sum_{\substack{j=0,\\ p\mid j}}^{p^{m-i+1} -1} \left(p^{-m+i}-p^{-m+i-1}\right)x^{jp^{i-1}},  
\end{eqnarray}
$1\leq i \leq m$, correspond to all primitive idempotents of $R_{q,p^m }$.

A trivial way to ensure that the products $f(x)e_i (x)$ are $\not\equiv 0 \mod x^{p^m} -1$, for all $0\leq i\leq m$, it is, for instance, to guarantee that each independent term of every product is non-zero over $\mathbb{F}_q$. Defining $\displaystyle{g_i(x)=\sum_{j=0}^{n-1} g_{i,j}x^j}$ and $\displaystyle{e_i(x)=\sum_{j=0}^{n-1} e_{i,j}x^j}$, $0\leq i\leq m$, so that $g_i(x)\equiv f(x)e_i (x)\mod x^{p^m} -1$, we have
\begin{eqnarray}
g_{0,0}&\equiv& \sum_{j=0}^{p^m -1} f_{p^m-j}e_{0,j}\Rightarrow \sum_{j=0}^{p^m -1}f_j\not\equiv 0 \mbox{ and} \nonumber\\
g_{i,0}&\equiv&\sum_{j=0}^{p^{m-i+1} -1}f_{p^m -jp^{i-1} }e_{i,jp^{i-1}}\nonumber\\
&\equiv& \sum_{\substack{j= 0,\\ p \nmid j}}^{p^{m-i+1} -1}f_{p^m -jp^{i-1} }e_{i,jp^{i-1}}+\sum_{\substack{j=0,\\ p \mid j}}^{p^{m-i+1} -1} f_{p^m -jp^{i-1} }e_{i,jp^{i-1}}\nonumber\\
&\equiv& -p^{-m+i-1}\sum_{\substack{j=0,\\ p \nmid j}}^{p^{m-i+1} -1} f_{p^m -jp^{i-1}}+\left(p^{-m+i}-p^{-m+i-1}\right)\sum_{\substack{j=0,\\ p \mid j}}^{p^{m-i+1} -1} f_{p^m -jp^{i-1}}\nonumber\\
&\not\equiv&0,
\end{eqnarray}
over $\mathbb{F}_q$, for each $1\leq i \leq m$, and all these subscripts into the summations are taken $\mod p^m$.
\end{proof}

\begin{remark}
It is worth mentioning that the proof of the Corollary~\ref{corpp1} has been done considering only coefficients from the independent terms of the polynomials $g_i(x)$, for $1\leq i \leq m$. However, it could be done choosing any other coefficients from these polynomials.
\end{remark}

\begin{corollary}\label{binppp}
Let $F(x)=f_i x^{[i]} + f_j x^{[j]} \in \mathcal{L}_n \left(\mathbb{F}^{*}_q \right)$, in which $0\leq i < j\leq n-1$. $F(x)$ is a linear permutation over $\mathbb{F}_{q^n}$ if and only if $f_i + f_j \not\equiv 0$ over $\mathbb{F}_q$.
\end{corollary}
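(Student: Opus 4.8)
The plan is to pass to the conventional $q$-associate $f(x)=f_i x^{i}+f_j x^{j}$ and use the criterion recalled at the start of this section (equivalently Theorem~\ref{teoppidemp}): $F(x)$ is a linear permutation over $\mathbb{F}_{q^n}$ if and only if $\gcd\!\left(f(x),x^{n}-1\right)=1$. First I would factor $f(x)=x^{i}\left(f_i+f_j x^{k}\right)$ with $k=j-i$ and $1\le k\le n-1$. Since $x\nmid x^{n}-1$, we have $\gcd\!\left(x^{i},x^{n}-1\right)=1$, so the whole question reduces to whether $g(x):=f_i+f_j x^{k}$ is coprime to $x^{n}-1$. As $f_j\in\mathbb{F}_q^{*}$, I would rewrite $g(x)=f_j\!\left(x^{k}-a\right)$ with $a=-f_i/f_j\in\mathbb{F}_q^{*}$, reducing the problem to deciding when $x^{k}-a$ and $x^{n}-1$ share a root in $\overline{\mathbb{F}_q}$.

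For the \emph{only if} direction (necessity) the quickest route is Corollary~\ref{corpp}: the sum of the coefficients of $F(x)$ is exactly $f_i+f_j$, so $f_i+f_j\equiv 0$ forces $F(x)$ not to be a linear permutation. In idempotent language this is transparent: pairing $f(x)$ with the all-ones primitive idempotent $e_0(x)=\frac{1}{n}\sum_{l=0}^{n-1}x^{l}$ gives $f(x)e_0(x)\equiv\left(f_i+f_j\right)e_0(x)$, which vanishes precisely when $f_i+f_j\equiv 0$, violating the criterion of Theorem~\ref{teoppidemp}.

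For the \emph{if} direction (sufficiency) I would assume $f_i+f_j\not\equiv 0$, i.e. $a\neq 1$, and suppose for contradiction that some $\zeta\in\overline{\mathbb{F}_q}$ is a common root of $x^{k}-a$ and $x^{n}-1$. Then $\zeta^{n}=1$ and $\zeta^{k}=a$, whence $a^{n}=\zeta^{kn}=1$; thus $a$ is simultaneously an $n$-th root of unity and an element of $\mathbb{F}_q^{*}$. The crux is to conclude that the only such element is $1$: the $n$-th roots of unity lying in $\mathbb{F}_q^{*}$ are exactly the elements whose order divides $\gcd(n,q-1)$, so they collapse to $\{1\}$ as soon as $\gcd(n,q-1)=1$, forcing $a=1$ against $a\neq 1$. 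Hence $x^{k}-a$ and $x^{n}-1$ are coprime and $F(x)$ is a linear permutation.

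The step I expect to be the real obstacle is exactly this last one, namely controlling the $n$-th roots of unity inside $\mathbb{F}_q$: the reduction and the necessity direction are formal, but sufficiency genuinely needs that $1$ is the only $n$-th root of unity in $\mathbb{F}_q$, i.e. $\gcd(n,q-1)=1$. This holds in the setting inherited from Corollary~\ref{corpp1}, where $n=p^{m}$ with $p$ an odd prime and $o(q)=\phi\!\left(p^{m}\right)$: that hypothesis forces $q\not\equiv 1\bmod p$, hence $p\nmid q-1$ and $\gcd\!\left(p^{m},q-1\right)=1$. Without such a condition the sufficiency can break down: over $\mathbb{F}_3$ with $n=4$ the binomial $x+x^{[2]}$ has $f_0+f_2=2\neq 0$ yet is not a permutation, since $-1\in\mathbb{F}_3$ is a nontrivial $4$-th root of unity with $(-1)^{2}=-a$. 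I would therefore isolate the roots-of-unity fact as the one nontrivial ingredient and make the standing hypothesis $\gcd(n,q-1)=1$ explicit.
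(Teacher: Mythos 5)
Your argument is correct, takes a genuinely different route from the paper's, and — more importantly — it exposes a real error in the paper. The necessity direction is the same in both proofs (Corollary~\ref{corpp}, i.e.\ pairing $f(x)$ with the all-ones idempotent). For sufficiency, however, the paper stays inside $R_{q,n}$: it extracts one coefficient from each product $f(x)e_l(x)$, obtaining a system of $t$ linear conditions $\alpha_l f_i+\beta_l f_j=\delta_l\not\equiv 0$, and asserts that after Gaussian elimination everything collapses to the single condition $f_i+f_j\not\equiv 0$. That step is not justified, and your factorization $f(x)=f_j\,x^{i}\left(x^{k}-a\right)$ with $a=-f_i/f_j$, together with the roots-of-unity analysis, shows it cannot be repaired in general: the statement is false without an extra hypothesis. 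Your counterexample is valid — over $\mathbb{F}_3$ with $n=4$ one has $\gcd(n,q)=1$, and $f(x)=x^2+1$ divides $x^4-1$, so $F(x)=x+x^{[2]}$ annihilates the simple component $\mathbb{F}_3[x]/\langle x^2+1\rangle$ (equivalently $f(x)e(x)\equiv 0$ for the corresponding primitive idempotent) even though $f_0+f_2=2\neq 0$. Hence the ``other $t-1$ conditions'' in the paper's system are not consequences of the first one.

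Your diagnosis of the missing hypothesis is also essentially sharp: the corollary holds for all binomials $f_ix^{[i]}+f_jx^{[j]}$ exactly when $1$ is the only $n$-th root of unity in $\mathbb{F}_q$, i.e.\ $\gcd(n,q-1)=1$; for a fixed pair $(i,j)$ the weaker condition $\gcd\left(n/\gcd(n,j-i),\,q-1\right)=1$ already suffices. This hypothesis does hold wherever the paper actually invokes the corollary (e.g.\ $n=p^m$ with $o(q)=\phi\left(p^m\right)$, or the $n=11$, $q=8$ example), but it is not part of the statement and should be. One tiny quibble: in your counterexample the common root of $x^{2}-a$ (with $a=-1$) and $x^{4}-1$ is a primitive fourth root of unity lying in $\mathbb{F}_9$, not the element $-1\in\mathbb{F}_3$ itself; the cleanest verification is simply that $x^{2}+1$ divides $x^{4}-1$ over $\mathbb{F}_3$.
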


\begin{proof}
If $F(x)$ is a linear permutation, then $f_i + f_j \not\equiv 0$ in $\mathbb{F}_q$, according to Corollary~\ref{corpp}.

Conversely, let $E=\left\{e_1 (x),...,e_t (x)\right\}$ be the set of primitive idempotents of $R_{q,n}$. In order to ensure that $F(x)$ is a linear permutation over $\mathbb{F}_{q^n}$, beyond the condition $f_i + f_j \not\equiv 0$, we have other $t-1$ conditions $\alpha_l f_i + \beta_l f_j =\delta_ l \not\equiv 0$, each one seen as a coefficient of the products $f(x)e_l(x) \mod x^n -1$, in which $2\leq l \leq t$ and $\alpha_l, \beta_l, \delta_ l \in \mathbb{F}_q$. Thus, we have the following equation system

\begin{equation}\label{sistema}
\left\{\begin{array}{ccc}
    f_i + f_j & =&\delta_1 \\
    \alpha_2 f_i + \beta_2 f_j & =&\delta_2 \\
    &\vdots&\\
    \alpha_t f_i + \beta_t f_j & =&\delta_t 
\end{array}\right..
\end{equation}

Applying the Gaussian Elimination on~\eqref{sistema} when needed, this equation system is reduced to the first equation and the other equations are simply descriptions of the non-zero element $f_i$ (or $f_j$) based on elements of $\mathbb{F}_q$. Therefore, the result follows.
\end{proof}


\begin{example}\label{ex22}
Still working on $R_{3,125}$ as in Example~\ref{exemplo}, consider $F(x)=f_{64}x^{[64]}+f_{63}x^{[63]}+f_{62}x^{[62]}+f_0x \in \mathcal{L}_{125}\left(\mathbb{F}_3 \right)$. Based on Corollary~\ref{corpp}, such linear polynomial is a linear permutation over $\mathbb{F}_{3^{125}}$ if 
\begin{equation}\label{rest}
\left\{\begin{array}{ccccccccc}
2f_0&+&2f_{62}&+&2f_{63}&+&2f_{64}&\not\equiv&0  \\
2f_0&+&f_{62}&+&f_{63}&+&f_{64}&\not\equiv&0  \\
f_0&&&&&&&\not\equiv&0  \\
2f_0&&&&&&&\not\equiv&0  
\end{array}\right.    
\end{equation}
over $\mathbb{F}_3$. Hence, 
\begin{eqnarray*}
F_1 (x)&=&x^{[64]}+x^{[63]}+x^{[62]}+2x,\\
F_2 (x)&=&2x^{[64]}+ 2x^{[63]}+ 2x^{[62]}+2x,\\
F_3 (x)&=&2x^{[64]}+ x^{[63]}+ x\mbox{ and }\\
F_4 (x)&=&x^{[64]}+ 2x^{[63]}+ x 
\end{eqnarray*}
are some examples of linear permutations over $\mathbb{F}_{3^{125}}$ which satisfy~\eqref{rest}. Moreover, observe that they are non-$\alpha-$cyclically equivalent, for $\alpha \in \mathbb{F}^{*}_{3^{125}}$.
\end{example}

Note that the restrictions imposed by Corollary~\ref{corpp1} might be simplified. On Example~\ref{ex22}/Equation~\eqref{rest}, it could be reduced only to $F(x)\in \mathcal{L}_{125}\left(\mathbb{F}_{3}\right)$ so that $f_0 + f_{62}+ f_{63}+ f_{64}\not\equiv 0$ and $f_0\neq 0$.

The Corollary~\ref{corpp1} is useful to describe some $\lambda-$complete linear permutations~\cite{bcpp} over $\mathbb{F}_{q^{p^m}}$, for $\lambda \in \mathbb{F}_q$. In fact, it inspires a more general definition (defined in~\cite{tuxanidy}, but not named like that) which we call $A-$complete linear permutations, in which $A\subset \mathbb{F}_q$. 
\begin{definition}\label{aclp}
Let $f(x)$ be a permutation polynomial over $\mathbb{F}_{q^n}$. Given $A\subset \mathbb{F}_{q^n}$, we define $f(x)$ is a \emph{$A$-complete permutation polynomial} over $\mathbb{F}_{q^n}$ if $f(x)+\lambda x$ is also a permutation polynomial over $\mathbb{F}_{q^n}$, for all $\lambda \in A$.
\end{definition}

From the definition above, it is straightforward $A\neq \emptyset$, since $0 \in A$.

It is possible to adapt the Corollary~\ref{corpp1} in order to offer conditions on $F(x)$ so that it is $A-$complete linear permutation over $\mathbb{F}_{q^{p^m}}$. 

\begin{corollary}\label{corpp2}
Let $A=\left\{0,\lambda_2, ...,\lambda_m \right\}\subset \mathbb{F}_q$ and $p$ an rational odd prime so that $o(q) = \phi\left(p^m \right )$ in $U\left(\mathbb{Z}_{p^m}\right)$. Given $\displaystyle{F(x)=\sum_{i=0}^{n-1} f_i x^{[i]} \in \mathcal{L}_{p^m}\left(\mathbb{F}_q\right)}$, if
\begin{itemize}
\item [(i)] $\displaystyle{\sum_{j=0}^{p^m -1}f_j\not\equiv -\lambda_l}$ and \item[(ii)]
$\displaystyle{-p^{-m+i-1}\sum_{\substack{j=0,\\ p \nmid j}}^{p^{m-i+1} -1} f_{p^m -jp^{i-1}}+\left(p^{-m+i}-p^{-m+i-1}\right)\sum_{\substack{j=0,\\ p \mid j}}^{p^{m-i+1} -1} f_{p^m -jp^{i-1}}}$\\
$\not\equiv -\left(p^{-m+i}-p^{-m+i-1}\right)\lambda_l,$ 
\end{itemize}
over $\mathbb{F}_q$, for all $1\leq i\leq m$ and all $\lambda_l \in A$, then $F(x)$ is a $A-$complete linear permutation over $\mathbb{F}_{q^{p^m}}$.
\end{corollary}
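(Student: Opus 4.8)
The plan is to reduce this to Corollary~\ref{corpp1} applied not to $F(x)$ itself, but to each of the shifted polynomials $F(x)+\lambda_l x$ with $\lambda_l \in A$. By Definition~\ref{aclp}, $F(x)$ is an $A$-complete linear permutation over $\mathbb{F}_{q^{p^m}}$ precisely when every $F(x)+\lambda_l x$ is a linear permutation over $\mathbb{F}_{q^{p^m}}$ (the case $\lambda_l=0\in A$ recovering $F(x)$ itself). Since $\lambda_l x=\lambda_l x^{[0]}$, the conventional $q$-associate of $F(x)+\lambda_l x$ is simply $f(x)+\lambda_l$, obtained from $f(x)$ by adding $\lambda_l$ to its independent term. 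Thus, invoking Theorem~\ref{teoppidemp}, it suffices to show that $(f(x)+\lambda_l)e_i(x)\not\equiv 0 \bmod x^{p^m}-1$ for every primitive idempotent $e_i(x)$ of $R_{q,p^m}$ and every $\lambda_l\in A$.

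Following the strategy of Corollary~\ref{corpp1}, I would guarantee each such product is nonzero by forcing its independent term (the coefficient of $x^0$) to be nonzero. Writing $(f(x)+\lambda_l)e_i(x)=f(x)e_i(x)+\lambda_l e_i(x)$, the independent term splits as the independent term $g_{i,0}$ of $f(x)e_i(x)$, already computed in Corollary~\ref{corpp1}, plus $\lambda_l$ times the coefficient of $x^0$ in $e_i(x)$. The key computation is therefore to read off the constant coefficient of each idempotent: for $e_0(x)=p^{-m}\left(1+x+\cdots+x^{p^m-1}\right)$ this coefficient is $p^{-m}$, while for $e_i(x)$, $1\le i\le m$, the monomial $x^0$ arises only from the index $j=0$, which lies in the range $p\mid j$, so its coefficient equals $p^{-m+i}-p^{-m+i-1}$.

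Consequently, the independent term of $(f(x)+\lambda_l)e_0(x)$ is a nonzero scalar multiple of $\sum_{j=0}^{p^m-1}f_j+\lambda_l$, and the independent term of $(f(x)+\lambda_l)e_i(x)$, for $1\le i\le m$, equals $g_{i,0}+\left(p^{-m+i}-p^{-m+i-1}\right)\lambda_l$. Requiring both to be nonzero over $\mathbb{F}_q$, for each $\lambda_l\in A$ and each $1\le i\le m$, yields exactly hypotheses (i) and (ii) after moving the $\lambda_l$-contribution to the right-hand side. Since these conditions make every $(f(x)+\lambda_l)e_i(x)$ nonzero, each $F(x)+\lambda_l x$ is a linear permutation by Theorem~\ref{teoppidemp}, and the conclusion follows.

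The main obstacle I anticipate is the bookkeeping in the second step: correctly identifying that the constant coefficient of $e_i(x)$ comes from the $p\mid j$ branch (value $p^{-m+i}-p^{-m+i-1}$) and not the $p\nmid j$ branch (value $-p^{-m+i-1}$). This is precisely what produces the factor $\left(p^{-m+i}-p^{-m+i-1}\right)$ multiplying $\lambda_l$ on the right-hand side of condition (ii); everything else reduces to the scalar arithmetic already carried out in Corollary~\ref{corpp1}.
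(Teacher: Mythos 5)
Your proposal is correct and is exactly the adaptation of Corollary~\ref{corpp1} that the paper intends (the paper omits the proof, merely noting that Corollary~\ref{corpp1} can be adapted): you apply Theorem~\ref{teoppidemp} to each $f(x)+\lambda_l$, and your identification of the constant coefficient of $e_i(x)$ as $p^{-m+i}-p^{-m+i-1}$ (the $j=0$ term of the $p\mid j$ branch) correctly produces the factor multiplying $\lambda_l$ in condition (ii), with condition (i) coming from the constant coefficient $p^{-m}$ of $e_0(x)$.
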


\begin{example}
We know $F(x)=f_t x^{[t]}\in \mathcal{L}_{11} \left(\mathbb{F}_8 \right)$ (See~\cite[Theorem 7.8, (ii)]{finitefields}) is a linear permutation over $\mathbb{F}_{8^{11}}$, for any $0 \leq t \leq 10$. Since $o(8)=10=\phi(11)$ in $U\left(\mathbb{Z}_{11} \right)$, so the primitive idempotents of $R_{8,11}$ are 
\begin{equation}
e_0 (x)=\sum_{i=0}^{10} x^i \mbox{ and }
e_1 (x)=1-e_0 (x). 
\end{equation}

From Corollary~\ref{binppp}, we observe that $F(x)+\lambda x$ is also a linear permutation over $\mathbb{F}_{8^{11}}$ if and only if $\lambda\neq -f_t$, namely, $F(x)$ is a $\mathbb{F}_8 \setminus\left\{-f_t \right\}-$complete linear permutation over $\mathbb{F}_{8^{11}}$.
\end{example}

Next, it is presented a way to construct linear permutations and their compositional inverses over $\mathbb{F}_{q^n}$ by the use of units in $R_{q,n}$; equivalently conventional $q-$associates of linear permutations in $\mathcal{L}_n \left(\mathbb{F}_q\right)$. Once more, the use of primitive idempotents comes as an essential tool, since we analyze the projections of $F(x)\in \mathcal{L}_n \left(\mathbb{F}_q\right)$ over the simple components generated by the primitive idempotents. As it will be seen, the task of getting units on $R_{q,n}$ may be easily simplified using some constructions of Section~\ref{sectioni}. 

\begin{theorem}\label{tp}
Let $E=\left\{e_1 (x),..., e_t (x)\right\}$ be the set of primitive idempotents of $R_{q,n}$. Given $F(x)\in \mathcal{L}_{n}\left(\mathbb{F}_q\right)$ a linear  permutation over $\mathbb{F}_{q^n}$ and $f(x)$ its conventional $q-$associate, then $f(x)$ can be written as
\begin{equation}\label{soma}
f(x)\equiv \sum_{i=1}^t f_i (x) e_i (x) \mod x^n -1, 
\end{equation}
in which $\gcd\left(f_i (x) , x^n -1\right)=1$, for $1\leq i\leq t$. Furthermore, the conventional $q-$associate of the compositional inverse of $F(x)$ is given as
\begin{equation}
f^{-1} (x)\equiv   \sum_{i=1}^t f_i^{-1} (x) e_i (x) \mod x^n -1.   \end{equation}

Every polynomial $f(x)$ (respectively $f^{-1} (x)$) is uniquely determined by vector \linebreak$(f_1 (x),..., f_t (x))$ (respectively $(f^{-1}_{1} (x),..., f^{-1}_{t} (x))$).
\end{theorem}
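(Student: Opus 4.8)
The plan is to transport everything into the semisimple ring $R_{q,n}$ and exploit the orthogonality relations~\eqref{eq7} together with the completeness relation~\eqref{soma1}. The governing observation is that, by Theorem~\ref{divisao} and the Rank--Nullity remark, $F(x)$ is a linear permutation over $\mathbb{F}_{q^n}$ if and only if $\gcd(f(x),x^n-1)=1$, i.e. if and only if $f(x)$ is a unit of $R_{q,n}$; and through the Chinese Remainder decomposition~\eqref{tcr} a unit is precisely an element whose image in every simple component $\mathbb{F}_q(\xi_i)$ is nonzero.

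First I would establish the decomposition~\eqref{soma}. Multiplying $f(x)$ by the identity written in the form~\eqref{soma1} gives $f(x)\equiv\sum_{i=1}^{t}f(x)e_i(x)\bmod x^n-1$. For each $i$ I set $f_i(x)\equiv f(x)e_i(x)+\bigl(1-e_i(x)\bigr)\bmod x^n-1$; using $e_i^2\equiv e_i$ and~\eqref{eq7} one checks that $f_i(x)e_i(x)\equiv f(x)e_i(x)$, so that $\sum_{i=1}^{t}f_i(x)e_i(x)\equiv f(x)$. Under the isomorphism~\eqref{tcr} the element $f_i(x)$ has image equal to that of $f(x)$ in the $i$-th component and equal to $1$ in every other component; since $f(x)$ is a unit, each of these images is nonzero, whence $f_i(x)$ is a unit and $\gcd(f_i(x),x^n-1)=1$, as required.

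Next I would identify the compositional inverse. Because the identity map is $x=x^{[0]}$, whose conventional $q$-associate is the constant polynomial $1$, Lemma~\ref{lemma2} shows that the conventional $q$-associate of $F^{-1}(x)$ is exactly the multiplicative inverse $f^{-1}(x)$ of $f(x)$ in $R_{q,n}$. Setting $g(x)=\sum_{i=1}^{t}f_i^{-1}(x)e_i(x)$ and multiplying out, the orthogonality~\eqref{eq7} and idempotency collapse the double sum:
\begin{equation}
f(x)g(x)\equiv\sum_{i=1}^{t}f_i(x)f_i^{-1}(x)e_i(x)\equiv\sum_{i=1}^{t}e_i(x)\equiv 1\bmod x^n-1,
\end{equation}
so that $g(x)\equiv f^{-1}(x)$ by uniqueness of inverses, which is the claimed formula. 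Finally, uniqueness is simply the bijectivity of the Chinese Remainder isomorphism~\eqref{tcr}: the vector $(f_1(x),\dots,f_t(x))$ fixes each projection $f_i(x)e_i(x)$, and the sum~\eqref{soma} recovers $f(x)$ unambiguously; the identical argument applies to $f^{-1}(x)$.

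The orthogonality collapse in the displayed line is routine. The two points that genuinely need care are the passage between the \emph{compositional} inverse $F^{-1}(x)$ and the \emph{ring-theoretic} inverse $f^{-1}(x)$, and the verification that each chosen $f_i(x)$ is coprime to $x^n-1$; both are handled by reading units of $R_{q,n}$ componentwise through~\eqref{tcr}.
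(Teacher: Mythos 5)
Your proposal is correct and follows essentially the same route as the paper: decompose $f(x)$ against the complete orthogonal family of primitive idempotents, arrange each $f_i(x)$ to be coprime to $x^n-1$, invert componentwise, and get uniqueness from the Chinese Remainder isomorphism~\eqref{tcr}. The one place you improve on the paper is the explicit choice $f_i(x)\equiv f(x)e_i(x)+\bigl(1-e_i(x)\bigr)$, which makes the unit property of each $f_i(x)$ immediate from reading components through~\eqref{tcr}; the paper instead writes ``we may assume $\gcd(f_i(x),x^n-1)=1$'' and justifies it by a contradiction argument with non-trivial idempotent divisors. You also make explicit the identification of the compositional inverse $F^{-1}(x)$ with the ring inverse $f^{-1}(x)$ via Lemma~\ref{lemma2}, which the paper uses tacitly.
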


\begin{proof}
Given $F(x)\in \mathcal{L}_n \left(\mathbb{F}_q\right)$ a linear permutation over $\mathbb{F}_{q^n}$, from Theorem~\ref{teoppidemp} we have $f(x)e_i (x)\not\equiv 0 \mod x^n -1$, for all $1\leq i \leq t$. We may assume that $
f(x)e_i(x)\equiv f_i(x)e_i(x) \mod x^n -1$ and $\gcd\left(f_i (x), x^n -1\right)=1$, for all $1\leq i \leq t$, since $\left\langle e_i (x) \right\rangle$ corresponds to simple component/finite field in $R_{q,n}$. Indeed, if $\gcd\left(f_i (x), x^n -1\right) =g(x)\neq 1$, then there is an idempotent $e(x)=e_{i_1} (x)+...+e_{i_j}(x)$, $I:=\left\{i_1 ,..., i_j \right\}\subset \{1,..,t\}$, so that $\langle g(x) \rangle= \langle e(x) \rangle$, and
\begin{equation}
f_i (x)\equiv h_i (x) g(x) \equiv \overline{h_i(x)} e(x)\mod x^n -1
\end{equation}
in which $h_i(x), \overline{h_i(x)}\in R_{q,n}$ and $\gcd\left(h_i (x), x^n -1\right)=\gcd\left(\overline{h_i (x)}, x^n -1\right)=1$. Thus
\begin{equation}
f_i (x) e_i(x)\equiv \overline{h_i(x)} e(x) e_i(x) \equiv \left\{\begin{array}{l}
    0,  \mbox{ if } i\not\in I  \\
    \\
     \overline{h_i (x)} e_i (x),\mbox{ otherwise }    
\end{array} \right. .
\end{equation}

Notice the first case above contradicts $f(x)$ being unit on $R_{q,n}$. Hence, without loss of generality, we may assume $\gcd\left(f_i (x), x^n -1\right)=1$. 

So, since $\displaystyle{\sum_{i=1}^t e_i (x)= 1}$ (See Equation~\eqref{soma1}) and from
\begin{eqnarray}\label{eqequiv}
f(x)e_1(x)&\equiv& f_1 (x) e_1 (x)\mod x^n -1  \nonumber \\
&\vdots&\\
f(x)e_t(x)&\equiv& f_t (x) e_t (x)\mod x^n -1 \nonumber,  
\end{eqnarray}
we have $\displaystyle{f(x)\equiv \sum_{i=1}^t f_ i (x) e_i (x)\mod x^n -1.}$ In particular, since $\gcd\left(f_i(x), x^n -1\right)=1$, each equivalence in~\eqref{eqequiv} can be rewritten as $f(x)f_i ^{-1} (x) e_i (x)\equiv e_i (x)\mod x^n -1$, for all $1\leq i \leq t$, consequently, we have
\begin{equation}
f(x)\sum_{i=1} ^t f_i ^{-1} (x)e_i(x)\equiv \sum_{i=1}^t e_i (x)\equiv 1 \mod x^n -1,
\end{equation}
therefore, $\displaystyle{f^{-1} (x)\equiv \sum_{i=1} ^t f_i ^{-1} (x)e_i(x) \mod x^n -1}$.

Finally, it is clear to observe that the decomposition of $F(x)$ over the simple components of $R_{q,n}$ is unique. Indeed, by the isomorphism~\eqref{tcr}, given $G(x), F(x)\in \mathcal{L}_n \left(\mathbb{F}_{q}\right)$, their corresponding conventional $q-$associates $f(x)$ and $g(x)$ are equal if and only if their projections over the simple components are all equal, namely, $f_i (x) e_i(x) \equiv g_i (x) e_i(x) \mod x^n -1$, for all $1\leq i \leq t$. 
\end{proof}

From known constructions for linear permutations in $\mathcal{L}_n \left(\mathbb{F}_q\right)$, that is, from simplest ones as monomials $\lambda x$, $\lambda \in \mathbb{F}^{*}_q$, to linear permutations of the type $x+x^2 +Tr_{2^n}\left(\frac{x}{a}\right)$ over $\mathbb{F}_{2^n}$~\cite{compinverse2}, in which $a\in \mathbb{F}^{*}_{2^n}$, their corresponding conventional $q-$associates may be used along with Theorem~\ref{tp} in order to provide new linear permutations.

\begin{example}
From Corollary~\ref{corolarioidemp}, the respective polynomials below are in fact primitive idempotents in $R_{3,25}$ 
\begin{eqnarray}
e_0 (x)&=&x^{24}+x^{23}+x^{22}+x^{21}+x^{20}+x^{19}+x^{18}+x^{17}\nonumber \\
&+&+x^{16}+x^{15}+x^{14}+x^{13}+x^{12}+x^{11}+x^{10}+x^9\nonumber\\
&+&x^8 +x^7 +x^6 +x^5 +x^4 +x^3 +x^2 +x+1 \nonumber\\
e_1 (x)&=&2x^{24}+2x^{23}+2x^{22}+2x^{21}+x^{20}+2x^{19}+2x^{18}\nonumber\\
&+&2x^{17}+2x^{16}+x^{15}+2x^{14}+2x^{13}+2x^{12}+2x^{11}\nonumber\\
&+&+x^{10}+2x^9 + 2x^8 +2x^7 +2x^6 +x^5+ 2x^4 \nonumber\\ 
&+&2x^3 +2x^2 + 2x+1 \mbox{ and}\nonumber\\
e_2(x)&=&x^{20}+x^{15}+x^{10}+x^5 +2.
\end{eqnarray}

According to Theorem~\ref{tp}, in order to obtain a linear permutation $F(x)$ and its respective compositional inverse, it is enough to get units $f_0 (x), f_1 (x), f_2 (x) \in R_{3,25}$, once the conventional $3-$associate of $F(x)$, $f(x)$, comes from as $\displaystyle{f(x)\equiv \sum_{i=0}^2 f_ i (x) e_i (x)\mod x^{25} -1.}$ 
When we exchange the order of the polynomials $f_i (x)$ in this sum, it provides other conventional $3-$associates of linear permutations. Indeed, take 
\begin{equation}
f_0 (x)= x^{20}+2x^{15}+1,    
\end{equation}
which is a unit in $R_{3,25}$. Assume $f_1 (x)=x^{21}+2x^{16}+x$ and $f_2 (x) =x^{22}+2x^{17}+x^2$,  which are also units in $R_{3,25}$, once they are left-cyclic shifts of $f_0 (x)$ (See Proposition~\ref{deslciclico}). Since $f_0 ^{-1} (x)= 2x^{20}+2x^{10}+x^5 +2$, consequently 
\begin{eqnarray}
f_1 ^{-1} (x)&=& 2x^{24}+2x^{19}+2x^9 +x^4\equiv x^{24}f_0 ^{-1} (x)\mbox{ and }\nonumber \\
f_2 ^{-1} (x)&=& 2x^{23}+2x^{18}+2x^8 +x^3\equiv x^{23}f_0 ^{-1} (x)\nonumber , 
\end{eqnarray}
so the polynomials
\begin{equation}
f_{\sigma} (x)\equiv \sum_{i=0}^2  f_{\sigma(i)} (x) e_i (x)\mod x^{25} -1,
\end{equation}
for $\sigma \in S_3$ (symmetric group), and
\begin{equation}
f_{\sigma} ^{-1} (x)\equiv \sum_{i=0}^2  f_{\sigma(i)}^{-1} (x) e_i (x)\mod x^{25} -1    
\end{equation}
are inverses of each other. The corresponding linearized $3-$associates and their respective compositional inverses are described at Table~\ref{tab:tabelaii}.
\begin{table*}[htb!]
\centering
\caption{Some linear permutations over $\mathbb{F}_{3^{25}}$ and their compositional inverses\label{tab:tabelaii}}
\begin{tabular}{ccc}
\hline
$S_3$&$F(x)$ & $F^{-1}(x)$ \\
 \hline
\multirow{2}{*}{$Id$} & $2x^{[22]}+2x^{[21]}+2x^{[16]}+x^{[12]}+2x^{[11]}$ & $2x^{[24]}+2x^{[19]}+2x^{[14]}+x^{[13]}+2x^{[9]} $ \\
 & $+x^{[7]} +2x^{[6]} +2x^{[2]} +2x^{[1]}$ & $ +2x^{[4]} +2x^{[3]}$ \\
\hline
\multirow{2}{*}{$(012)$} & $2x^{[22]}+2x^{[20]}+2x^{[17]}+2x^{[12]}+x^{[10]}$  & $2x^{[23]}+2x^{[18]}+x^{[15]}+2x^{[13]}+2x^{[8]} $  \\
& $+2x^{[7]} +x^{[5]} +2x^{[2]} +2x$  & $+2x^{[5]} +2x^{[3]}$  \\
\hline
\multirow{2}{*}{$(021)$} & $2x^{[21]}+2x^{[20]}+2x^{[15]}+x^{[11]}+2x^{[10]}$ & $2x^{[20]}+2x^{[15]}+x^{[14]}+2x^{[10]}+2x^{[5]} $ \\
& $+x^{[6]} +2x^{[5]}+ 2x^{[1]}+2x$ & $+2x^{[4]} +2x$ \\
\hline
\multirow{2}{*}{$(02)$}& $2x^{[21]}+2x^{[20]}+2x^{[16]}+2x^{[11]}+x^{[10]}$ & $2x^{[24]}+2x^{[19]}+x^{[15]}+2x^{[14]}+2x^{[9]}$ \\
& $+2x^{[6]}+x^{[5]}+2x^{[1]}+2x$ & $+2x^{[5]}+2x^{[4]}$ \\
\hline
\multirow{2}{*}{$(12)$} & $2x^{[22]}+2x^{[21]}+2x^{[17]}+2x^{[12]}+x^{[11]}$  & $2x^{[23]}+2x^{[18]}+x^{[14]}+2x^{[13]}+2x^{[8]}$ \\
& $+2x^{[7]}+x^{[6]}+2x^{[2]}+2x^{[1]}$  & $+2x^{[4]}+2x^{[3]}$ \\
\hline
\multirow{2}{*}{$(01)$}& $2x^{[22]}+2x^{[20]}+2x^{[15]}+x^{[12]}+2x^{[10]}$ & $2x^{[20]}+2x^{[15]}+x^{[13]}+2x^{[10]}+2x^{[5]}$\\
& $+x^{[7]}+2x^{[5]}+2x^{[2]}+2x$ & $+2x^{[3]}+2x$\\
\hline
\end{tabular}
\vspace{0.1 cm}
\end{table*}
\end{example}

In particular, the Theorem~\ref{tp} is also a very useful tool in the search of involutions over $\mathbb{F}_{q^n}$.
\begin{corollary}\label{involucoes}
Let $E=\left\{e_1 (x),..., e_t (x)\right\}$ be the set of primitive idempotents of $R_{q,n}$. $F(x)\in \mathcal{L}_n\left(\mathbb{F}_q \right)$ is an linear involution over $\mathbb{F}_{q^n}$ if and only if its conventional $q-$associate  is
\begin{equation}
f(x)\equiv \sum_{i=1}^t f_i (x) e_i (x) \mod x^n -1, 
\end{equation}
in which $f_i ^2 (x)\equiv 1\mod x^n -1$, for all $1\leq i \leq t$.
\end{corollary}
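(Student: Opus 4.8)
The plan is to leverage Theorem~\ref{tp} directly, since it already gives the decomposition $f(x)\equiv \sum_{i=1}^t f_i(x) e_i(x) \bmod x^n-1$ together with the formula $f^{-1}(x)\equiv \sum_{i=1}^t f_i^{-1}(x) e_i(x)$ for the conventional $q$-associate of the compositional inverse. The key observation connecting involutions to this framework is the translation via $q$-associates: by Lemma~\ref{lemma2}, composition $F(x)\circ F(x)$ corresponds to the ordinary product $f(x)f(x)=f^2(x)$ of conventional $q$-associates. Hence $F(x)$ is a linear involution, i.e. $F(x)\circ F(x)\equiv x \bmod x^{[n]}-x$, if and only if $f^2(x)\equiv 1 \bmod x^n-1$, because $x$ is the identity of the symbolic multiplication whose conventional $q$-associate is $1$.

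First I would establish the forward direction. Assuming $F(x)$ is an involution, $F(x)$ is in particular a linear permutation, so Theorem~\ref{tp} applies and gives the decomposition with each $\gcd(f_i(x),x^n-1)=1$. Translating the involution condition to $q$-associates yields $f^2(x)\equiv 1\bmod x^n-1$. Multiplying both sides by the primitive idempotent $e_i(x)$ and using $f(x)e_i(x)\equiv f_i(x)e_i(x)$ together with orthogonality~\eqref{eq7}, one gets $f_i^2(x)e_i(x)\equiv e_i(x)\bmod x^n-1$ for each $i$. Since $\langle e_i(x)\rangle$ is a simple component (a field) in which $e_i(x)$ acts as the multiplicative identity, this forces $f_i^2(x)\equiv 1\bmod x^n-1$ after identifying representatives, which is exactly the claimed condition.

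Conversely, suppose $f(x)\equiv \sum_{i=1}^t f_i(x)e_i(x)$ with each $f_i^2(x)\equiv 1\bmod x^n-1$. Then each $f_i(x)$ is a unit (its square is $1$), so $\gcd(f_i(x),x^n-1)=1$ and by Theorem~\ref{teoppidemp} the polynomial $F(x)$ is a linear permutation. Squaring the decomposition and exploiting orthogonality to kill all cross terms $e_i(x)e_j(x)$ for $i\neq j$, I compute $f^2(x)\equiv \sum_{i=1}^t f_i^2(x)e_i(x)\equiv \sum_{i=1}^t e_i(x)\equiv 1\bmod x^n-1$, using~\eqref{soma1} in the last step. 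Translating back through Lemma~\ref{lemma2} gives $F(x)\circ F(x)\equiv x$, so $F(x)$ is an involution.

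The main subtlety — not a deep obstacle, but the step requiring care — is the precise handling of the idempotent multiplication $f(x)e_i(x)\equiv f_i(x)e_i(x)$ and the claim that $f_i^2(x)e_i(x)\equiv e_i(x)$ in a simple component implies $f_i^2(x)\equiv 1 \bmod x^n-1$. Because $f_i(x)$ is only determined modulo the annihilator of $e_i(x)$, one must be careful to interpret the condition $f_i^2(x)\equiv 1$ consistently with the uniqueness statement at the end of Theorem~\ref{tp}, namely that $f(x)$ is determined by the vector $(f_1(x),\dots,f_t(x))$ of projections. The cleanest way to avoid ambiguity is to work throughout with the products $f_i(x)e_i(x)$ rather than the representatives $f_i(x)$ alone, so that every equivalence is stated modulo $x^n-1$ inside the ring $R_{q,n}$, where orthogonality and~\eqref{soma1} do all the work.
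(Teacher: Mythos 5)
Your proof is correct and follows the same route the paper intends: the corollary is stated there as an immediate consequence of Theorem~\ref{tp} (no separate proof is given), and your argument is exactly that consequence worked out via Lemma~\ref{lemma2}, the orthogonality relation~\eqref{eq7} and the identity~\eqref{soma1}. The representative-choice subtlety you flag in the forward direction is real but harmless: given $f^2(x)\equiv 1$, one may simply take $f_i(x)=f(x)$ for every $i$ (or replace $f_i(x)$ by $f_i(x)e_i(x)+1-e_i(x)$) to obtain representatives satisfying $f_i^2(x)\equiv 1 \bmod x^n-1$ literally.
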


Notice that $R_{q,n}$ is not a finite field; in fact it is a ring with zero divisors. Hence, the polynomial $X^n -1$ may have more than $n$ roots in $R_{q,n}[X]$. In particular, $f_i ^2 (x)-1=0$ has at least two solutions in $R_{q,n}$: $f_i (x)\in \{1,-1\} \subset \mathbb{F}_q$. With this trivial information, it becomes very simple to construct some involutions over $\mathbb{F}_{q^n}$, namely,
\begin{equation}\label{maisoumenosinv}
\pm e_0 (x)\pm e_1 (x)\pm \ldots \pm e_t (x) \mod x^n -1     
\end{equation}
describe several conventional $q-$associates of involutions over $\mathbb{F}_{q^n}$. In addition, the involutions obtained in~\eqref{maisoumenosinv} may be used to get other linear involutions over $\mathbb{F}_{q^n}$.

\begin{example}
From Corollary~\ref{corolarioidemp}, the respective polynomials below are in fact primitive idempotents in $R_{11,9}$
\begin{eqnarray*}
e_0 (x)&=& 5x^8 +5x^7 +5x^6 +5x^5 +5x^4 +5x^3 +5x^2\nonumber \\
&+& 5x+5\\
e_1 (x)&=& 6x^8 +6x^7 +10x^6 +6x^5+ 6x^4 +10x^3 +6x^2 \nonumber\\
&+&6x+10\mbox{ and }\\
e_2 (x)&=& 7x^6 +7x^3 +8.
\end{eqnarray*}

As noted before, $\displaystyle{f(x)\equiv \sum_{i=0}^2 f_i (x) e_i (x)}\mod x^{9}-1$, in which $f_i (x)\in\{1, 10\}\subset \mathbb{F}_{11}$, describe conventional $11-$associates of involutions $F(x) \in \mathcal{L}_9 \left(\mathbb{F}_{11}\right)$ over $\mathbb{F}_{11^{9}}$. The Table~\ref{tab:tabelaiii} presents all involutions obtained from this way. Such involutions may be applied again to Corollary~\ref{involucoes} to get other ones. 
\begin{table*}[h!]
\centering
\caption{Involutions over $\mathbb{F}_{11^{9}}$\label{tab:tabelaiii}}
\begin{tabular}{cc}
\hline
$f_0 (x),f_1 (x),f_2 (x)$&$F(x)$ 
\\
\hline
$1,1,1$& $x$ 
\\
$1,1,10$ & $8x^{[6]}+8x^{[3]}+7x$
\\
$1,10,10$ & $10x^{[8]}+10x^{[7]}+10x^{[6]}+10x^{[5]}+10x^{[4]}+10x^{[3]}+10x^{[2]}+10x^{[1]}+9x$  
\\
$1,10,1$& $10x^{[8]}+10x^{[7]}+2x^{[6]}+10x^{[5]}+10x^{[4]}+2x^{[3]}+10x^{[2]}+10x^{[1]}+3x$  
\\
10,1,1& $x^{[8]}+x^{[7]}+x^{[6]}+x^{[5]}+x^{[4]}+x^{[3]}+x^{[2]}+x^{[1]}+2x$ 
\\
10,10,1& $3x^{[6]}+3x^{[3]}+4x$ 
\\
10,1,10& $x^{[8]}+x^{[7]}+9x^{[6]}+x^{[5]}+x^{[4]}+9x^{[3]}+x^{[2]}+x^{[1]}+8x$ 
\\
10,10,10& $10x$ 
\\
\hline
\end{tabular}
\vspace{0.1 cm}
\end{table*}
\end{example}
\section{Conclusion}\label{conclusion}
In this work, we have presented a simple way to construct a lot linear permutations over $\mathbb{F}_{q^n}$ and their respective compositional inverses, examining the projections of the corresponding conventional $q-$associates over the simple components generated by primitive idempotents of $R_{q,n}$. Particularly, the matter of getting involutions has also been addressed in here.

Such construction is simple and very effective since, for instance, in possession of a only pair $\left\{f(x),f^{-1}(x)\right\}$ in $R_{q,n}$ and their left-cyclic shifts, it is possible to provide several linear permutations and their compositional inverses.

As a future problem, it will be interesting to analyze how many fixed points these linear permutations/involutions have got and their behaviours, once this is a current research problem due to applications on cryptography.

\section*{Acknowledgment}

The authors would like to thank...

\ifCLASSOPTIONcaptionsoff
\fi




\begin{thebibliography}{1}

\bibitem{agw} A. Akbary, D. Ghioca and Q. Wang, \emph{On Constructiong Permutations on Finite Fields}, Finite Fields and Their Applications, vol 17, 51-67, 2011.

\bibitem{bcpp} L.A. Bassalygo and V.A. Zinoviev, \emph{
Permutation and Complete Permutation Polynomials},
Finite Fields and Their Applications,
vol 33, 198-211, 2015.

\bibitem{prince} J. Borghoff et al., \emph{PRINCE - A Low-Latency Block Cipher for Pervasive Computing Applications}, in Advances in Cryptology, vol 7658, 208-225, 2012.

\bibitem{charpin} P. Charpin, S. Mesnager and S. Sarkar, \emph{Involutions Over the Galois Field $ {\mathbb F}_{2^{n}}$}, IEEE Transactions on Information Theory, vol 62, 2266-2276, 2016.

\bibitem{polcycliccodes} C. Ding and S. Ling, \emph{A q-Polynomial Approach to Cyclic Codes}, Finite Fields and Their Applications, vol 20, 1-14, 2013.

\bibitem{ferraz} R. Ferraz and C. Polcino Milies, \emph {Idempotents in Group Algebras and Minimal Abelian
Codes}, Finite Fields and Their Applications, vol 13, 382-393, 2007.

\bibitem{gilbert} E. N. Gilbert, \emph{Cyclically Permutable Error-Correcting Codes}, IEEE Transactions on Information Theory, vol 9, 175-182, 1963.

\bibitem{grosso} V. Grosso et al., \emph{SCREAM \& iSCREAM Side-Channel Resistant Authenticated Encryption With Masking}, CAESAR, Available:
https://competitions.cr.yp.to/round1/screamv1.pdf, 2014.

\bibitem{koetterk} R. K\"otter and F.R. Kschischang, \emph{Coding for Errors and Erasures in Random Network Coding}, IEEE Transactions on Information Theory, vol 54, 3579-3591, 2008.

\bibitem{invmon} G. M. Kyureghyan and V. Suder, \emph{On Inversion in $\mathbb{Z}_{2^n -1}$}, Finite Fields and Their Applications, vol 25, 234-254, 2014.

\bibitem{app} J. Levine and J. V. Brawley, \emph{Some Cryptographic Applications of Permutation Polynomials}, Cryptologia, vol 1, 76-92, 1977. 

\bibitem{finitefields} R. Lidl and H. Niederreiter, \emph{Finite Fields}, Cambridge University Press, 1997.

\bibitem{linearpoldimension} S. Ling and L. Qu, \emph{A Note on Linearized Polynomials and the Dimension of Their
Kernels}, Finite Fields and Their Applications, vol 18, 56-62, 2012.

\bibitem{mac} F. J. MacWilliams and N. J. A. Sloane, \emph{The Theory of Error-Correcting Codes}, North Holand, 1983.

\bibitem{completemaps}
H. Niederreiter and K. Robinson, \emph{Complete Mappings of Finite Fields}, Journal of the Australian Mathematical Society, vol 33, 197-212, 1982.

\bibitem{niu} T. Niu, K. Li, L. Qu and Q. Wang, \emph{New Constructions of Involutions over Finite Fields}, Cryptography and Communications, vol 12, 165-185, 2020.

\bibitem{polcino} C. Policino Milies and S.K. Sehgal, \emph{An Introduction to Group Rings}, Kluwer Academic Publishers, Dordrecht, 2002.

\bibitem{lucas} L. Reis, \emph{Nilpotent Linearized Polynomials over Finite Fields
and Applications}, Finite Fields and Their Applications,
vol 50, 279-292, 2018.

\bibitem{valdemar} V. C. da Rocha and J. S. Lemos-Neto, \emph{New Cyclically Permutable Codes}, IEEE Information Theory Workshop, Paraty/Brazil, 693-697, 2011.


\bibitem{tuxanidy} A. Tuxanidy and Q. Wang, \emph{Compositional Inverses and Complete Mappings over Finite Fields}, Discrete Applied Mathematics, vol 217, 318-329,
2017.

\bibitem{compinverse2} B. Wu, \emph{The Compositional Inverse of a Class of Linearized Permutation Polynomials over $\mathbb{F}_{2^n}$, $n$ odd}, Finite Fields and Their Applications, vol 29, 34-48, 2014.

\bibitem{wuliu} B. Wu and Z. Liu, \emph{Linearized Polynomials over Finite Fields Revisited}, Finite Fields and Their Applications, vol 22, 79-100, 2013.

\bibitem{yuanzeng}
P. Yuan and X. Zeng, \emph{A Note on Linear Permutation Polynomials},
Finite Fields and Their Applications,
vol 17, 488-491, 2011.

\bibitem{zhao} W. Zhao and X. Tang, \emph{A Characterization of Cyclic Subspace Codes via Subspace Polynomials},
Finite Fields and Their Applications, vol 57, 1-12, 2019.

\bibitem{china} Y. Zheng, Q. Wang and W. Wei, \emph{On Inverses of Permutation Polynomials of Small Degree Over Finite Fields}, IEEE Transactions on Information Theory, vol 66, 914-922, 2020.

\bibitem{zheng} D. Zheng, M. Yuan, N. Li, L. Hu and X. Zeng, \emph{Constructions of Involutions Over Finite Fields}, IEEE Transactions on Information Theory, vol 65, 7876-7883, 2019.


\end{thebibliography}
%

\end{document}